\newtheorem{theorem}{Theorem}
\newtheorem{remark}[theorem]{Remark}
\newtheorem{proposition}[theorem]{Proposition}
\newtheorem{lemma}[theorem]{Lemma}
\title{Uniform large-scale $\e$-regularity for entropic optimal transport}
\author{Rishabh S. Gvalani}
\address[1]{D-MATH, ETH Z\"urich, R\"amistraße 101, 8001 Z\"urich, Switzerland}
\email{rgvalani@ethz.ch}
\author{Lukas Koch}
\address[2]{Mathematics Department, University of Sussex, Falmer Campus, BN1 9QH Brighton, United Kingdom}
\email{lukas.koch@sussex.ac.uk}
\newcommand{\e}{\varepsilon}
\newcommand\restr[2]{{% we make the whole thing an ordinary symbol
  \left.\kern-\nulldelimiterspace % automatically resize the bar with \right
  #1 % the function
  \vphantom{|} % pretend it's a little taller at normal size
  \right|_{#2} % this is the delimiter
  }}
\newcommand{\dd}{\mathrm{d}}
\newcommand{\supp}{\mathrm{supp}}
\newcommand{\R}{\mathbb R}
\begin{document}
\begin{abstract}
We study the regularity properties of the minimisers of entropic optimal transport providing a natural analogue of the $\e$-regularity theory of quadratic optimal transport in the entropic setting. More precisely, we show that if the minimiser of the entropic problem satisfies a gradient BMO-type estimate at some scale, the same estimate holds all the way down to the natural length-scale associated to the entropic regularisation.

Our result follows from a more general $\e$-regularity theory for optimal transport costs which can be viewed as perturbations of quadratic optimal transport. We consider such a perturbed cost and require that, under a certain class of admissible affine rescalings, the minimiser remains a local quasi-minimiser of the quadratic problem (in an appropriate sense) and that the cost of ``long trajectories'' of minimisers (and their rescalings) is small. Under these assumptions, we show that the minimiser satisfies an appropriate $C^{2,\alpha}$ Morrey--Campanato-type estimate which is valid up to the scale of quasi-minimality. 
\end{abstract}

\maketitle

In this paper we investigate regularity properties of minimisers of the entropic optimal transport problem:
\begin{align}
OT_\e(\lambda,\mu)=\min_{\pi\in \Pi(\lambda,\mu)} \int \lvert x-y\rvert^2 \dd \pi+ \e^2 \int \log\left(\frac{\dd \pi}{\dd(\lambda\otimes\mu)}\right) \dd \pi,
\end{align}
where $\lambda,\mu\in \mathscr M(\R^d)$ with $\lambda(\R^d)=\mu(\R^d)$ and $\Pi(\lambda,\mu)$ denotes the set of measures in $\mathscr M(\R^d\times \R^d)$ with marginals $\lambda$ and $\mu$\footnote{Note that entropic optimal transport is commonly formulated with $\e$ replacing $\e^2$ in the expression for $OT_\e$. We choose to use $\e^2$ as then $\e$ represents a length-scale in the problem.}.  Entropic optimal transport has received a lot of attention in recent years. This is due to the fact that it is possible to efficiently compute solutions of the minimisation problem using Sinkhorn's algorithm \cite{BenamouCarlier,Cuturi} and for $\e \ll 1$ the cost $OT_\e$ is close to $OT$, the cost of the quadratic optimal transport problem. In addition to this, the entropic problem has a rich structure with interesting connections to the Schr\"odinger bridge problem from physics and is thus of independent interest itself.  We refer the reader to the lecture notes \cite{Nutz} and review article~\cite{Leo14} for an introduction to the entropic optimal transport problem. 

For our purposes, a key insight is that (under regularity assumptions on the marginals) entropic optimal transport can be ``Taylor-expanded'' around quadratic optimal transport in the following manner
\begin{equation}
OT_\e(\lambda,\mu) = OT(\lambda,\mu)+ \frac d 2\e^2 \log(\e^{-2})+O(\e^2) \, .
\label{eq:eot}
\end{equation}
Higher-order terms in the expansion can be made explicit. The second-order expansion was first obtained in \cite{Erbar} and obtained under mild regularity assumptions in \cite{Carlier,Eckstein}. A third-order expansion was found in \cite{Conforti,Chizat}. Moreover, the contribution to the energy by the entropic part of $OT_\e$ and the contribution by the quadratic part was disentangled in \cite{Malamut}. We stress that all of these results are global while the property we will use for our result (see \eqref{eq:quasimin} below) is local. We do not aim for and do not obtain the precise form of the second-order term, but in order to obtain quasiminimality of entropic optimal transport at order $\e^2$ (rather than $\e^2 \log(\e^{-2})$) we do need to separate the quadratic and entropic contributions. In \cite{Malamut} this is obtained by utilising the dual formulation and Minty's trick. Here, we use convexity of the entropic part and a competitor based on the exact entropic minimiser on the torus. In order to formulate our main result for the entropic optimal transport problem , it is useful to define for $R>0$, $\#_R:= (B_R\times \R^d)\cup(\R^d\times B_R)$. Our main result is then the following regularity estimate.
\begin{theorem}\label{thm:entropicmain}
Suppose $\lambda$ and $\mu$ have $C^{0,\alpha}$-densities for some $\alpha\in (0,1)$ and $\pi$ is a minimiser of the entropic optimal transport problem~\eqref{eq:eot}. Define
\begin{align}
E(\pi,R):= \frac 1 {R^{d+2}}\int_{\#_R}\lvert x-y\rvert^2 \dd \pi, \, D_{\lambda,\mu}(R):=R^{2\alpha}([\lambda]_{\alpha,R}^2+[\mu]_{\alpha,R}^2)+|\lambda(0)-\mu(0)|^2. \label{eq:EDdef}
\end{align}
 Then, there exists some $\e_1>0$ such that if for some $R_0>0$ the densities of $\lambda$ and $\mu$ are bounded away from zero on $B_{R_0}$ and
\begin{align}
E(\pi,R_0)+D_{\lambda,\mu}(R_0)+\frac{\e^2}{R_0^2} <\e_1,
\label{eq:smallnessentropic}
\end{align}
then for any $r\leq R_0$ such that  $\left(\frac {r}{R_0}\right)\gg \left(\frac{\e}{R_0}\right)$, we have
\begin{align}
\min_{A\in \R^{d \times d},b\in \R^d} \frac 1 {r^{d+2}}\int_{\#_r} \lvert y-Ax-b\rvert^2 \dd \pi\lesssim E(\pi,R_0)+D_{\lambda,\mu}(R_0)+\frac{\e^2}{r^{2}} \, .
\end{align}
\end{theorem}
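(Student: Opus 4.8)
The plan is to obtain Theorem~\ref{thm:entropicmain} as an instance of a general $\e$-regularity scheme for optimal transport costs that perturb the quadratic one, and then verify the two structural hypotheses of that scheme for the entropic problem. Fix a base point, say the origin, and a scale $R\in(\varepsilon,R_0]$, let $(A,b)$ denote the affine minimiser appearing in the conclusion at scale $R$, and consider the rescaled plan $\hat\pi$ obtained by pushing $\pi$ forward under $(x,y)\mapsto(x/R,(y-Ax-b)/R)$; its entropic length-scale is $\hat\e=\e/R$ and its excess at scale $1$ is, up to a universal constant, $\min_{A,b}\frac1{R^{d+2}}\int_{\#_R}\lvert y-Ax-b\rvert^2\,\dd\pi$. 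The first task is to show that, as long as $\hat\e\ll1$ (equivalently, as long as we stay in the admissible range $r\gg\e$), the plan $\hat\pi$ is a \emph{local quasi-minimiser of the quadratic optimal transport problem} between its own marginals, with quasi-minimality gap bounded by $D_{\lambda,\mu}(R)+\e^2/R^2$. This is exactly where a \emph{localised} order-$\e^2$ version of the expansion~\eqref{eq:eot} is needed: one compares the entropic cost of $\pi$ on $\#_R$ against a competitor built by composing a quadratically optimal plan with the exact entropic minimiser on a torus at scale $\e$, so that the $\e^2\log(\e^{-2})$ contributions of the two plans cancel and only an $O(\e^2)$ remainder survives, the cross terms being controlled by convexity of $z\mapsto z\log z$. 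One simultaneously needs the marginal densities of $\hat\pi$ to remain bounded above and below; this is propagated from the hypothesis that $\lambda,\mu$ are bounded away from zero on $B_{R_0}$ via bounds on the Schr\"odinger potentials together with the $C^{0,\alpha}$ control, using that the oscillation quantity $D_{\lambda,\mu}(\cdot)$ is summable along a geometric sequence of radii.

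The second hypothesis to verify is that the cost of ``long trajectories'' --- the mass of $\pi$, and of every admissible rescaling $\hat\pi$, transported over distances comparable to the ambient scale --- is bounded by $E(\pi,R_0)+D_{\lambda,\mu}(R_0)+\e^2/R_0^2$. For the rescalings this is a Chebyshev estimate once the excess is known to be small; the only new ingredient relative to the quadratic setting is a bound, uniform in $\e$, on how far mass can travel, which follows from the exponential tails of the kernel $e^{-\lvert x-y\rvert^2/(2\e^2)}$ and $L^\infty$ bounds on the Schr\"odinger potentials restricted to $B_R$. Granting these two hypotheses, the general scheme runs as in the variational proof of partial regularity for the Monge--Amp\`ere/optimal transport problem: a harmonic approximation lemma shows that on a slightly smaller ball $\hat\pi$ is $L^2$-close, in the sense of $\int\lvert y-x-\nabla\phi(x)\rvert^2$, to the graph of $\nabla\phi$ for some harmonic $\phi$, with error controlled by the excess plus the quasi-minimality gap; one proves it by passing to the Benamou--Brenier form of $\hat\pi$ to produce an almost-solution of a continuity equation coupled to a Poisson equation, and closes the estimate using quasi-minimality. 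Interior $C^{1,1}$ bounds for $\phi$ then yield, with the affine map $x\mapsto x+\nabla\phi(0)+\nabla^2\phi(0)x$, the one-step decay
\[
\text{excess}(\hat\pi;\theta)\ \lesssim\ \theta^2\,\text{excess}(\hat\pi;1)\ +\ C_\theta\bigl(D_{\lambda,\mu}(R)+\e^2/R^2\bigr)
\]
for any fixed $\theta\in(0,1)$. Choosing $\theta$ small and iterating from $R_0$ down to $r$, while checking that the accumulated affine maps stay uniformly close to the identity (here one uses the smallness threshold $\e_1$ and the summability of $D_{\lambda,\mu}(\cdot)$ along the dyadic radii, the term $\lvert\lambda(0)-\mu(0)\rvert^2$ keeping the Jacobians near one), produces the claimed Campanato estimate; the geometric sum of the errors $(\e/\theta^kR_0)^2$ is dominated by its last term, which yields the $\e^2/r^2$ on the right-hand side, and the iteration terminates at the first scale $r\sim\e$ where quasi-minimality is lost.

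I expect the main obstacle to be the quasi-minimality step at the sharp order $\e^2$: the global second-order expansions in the literature do not localise directly, so one must carry out the torus-competitor comparison on $\#_R$ by hand, and do so while simultaneously maintaining two-sided control of the marginal densities of the rescaled plans --- delicate because rescaling inflates the H\"older seminorms and this growth must be absorbed into the summable error $D_{\lambda,\mu}$. A secondary difficulty is making the harmonic approximation robust to the entropic perturbation: the entropy term is not pointwise small, only small in the energetic sense after rescaling, so the almost-Poisson equation satisfied by the Eulerian momentum field carries a right-hand side that has to be estimated solely through the quasi-minimality gap, with no pointwise use of $\dd\pi/\dd(\lambda\otimes\mu)$.
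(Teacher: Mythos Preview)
Your overall architecture matches the paper: deduce Theorem~\ref{thm:entropicmain} from a general scheme for costs that are perturbations of quadratic OT, then verify two structural hypotheses for the entropic problem; the harmonic approximation plus Campanato iteration, and the quasi-minimality verification via a torus-type competitor and convexity of $z\mapsto z\log z$ so that the $\e^2\log(\e^{-2})$ terms cancel, are essentially what the paper does.

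There is, however, a genuine gap in your treatment of the long-trajectory hypothesis. What is actually needed (and what the paper states as Assumption~(ii)) is an \emph{energy} bound of the form $\frac{1}{R^{d+2}}\int_{\#_R\cap\{|x-y|\ge\Lambda R\}}|x-y|^2\,\dd\pi_{\mathsf s}\le C\delta\,E(\pi_{\mathsf s},2R)$ with $\delta$ an arbitrarily small parameter; a Chebyshev estimate from smallness of the excess gives only a \emph{mass} bound and no gain of $\delta$, so it cannot close the harmonic-approximation step. Your proposed route via $L^\infty$ bounds on the Schr\"odinger potentials would in principle deliver exponential tails, but such bounds are not available a priori at the sharp local level and you do not indicate how to obtain them. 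The paper avoids this entirely by using the \emph{approximate cyclical monotonicity} identity for entropic minimisers (the multiplicative relation between $\dd\pi_\e/\dd(\lambda\otimes\mu)$ at $(x,y),(x',y')$ and at the swapped pairs), integrating it against a carefully chosen cone of short trajectories to extract the factor $e^{-1/\e^2}$; see Proposition~\ref{prop:verylong}. You also omit a second ingredient that the paper singles out as the main new technical point: once very long trajectories are controlled by Assumption~(ii), one still needs to control trajectories of \emph{medium} length (between $\rho$ and $\Lambda R$), because no $L^p$ bound with $p>1$ is available for mere quasi-minimisers. This is the role of Lemma~\ref{lem:soft}, a weak-$L^{(d+2)^-}$-type estimate obtained by a competitor construction, and it is what replaces the $L^\infty$ bounds used in~\cite{Koch2023} at the two places where minimality enters the harmonic approximation.
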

Note that there are two non-dimensional quantities in Theorem~\ref{thm:entropicmain}, the ratio $r/R_0$ of mesoscopic to macroscopic length-scales and the ratio $\e/R_0$ of entropic (microscopic) to macroscopic length-scales. As mentioned in the abstract, the above result follows from a more general regularity theory for costs that are perturbations of quadratic optimal transport. Indeed, we consider the transport problem
\begin{align}
\min_{\pi\in \Pi(\lambda,\mu)} \mathsf{c}(\pi).
\end{align}
where $c\colon \Pi(\lambda,\mu)\to \R$ is a given cost function and $\lambda,\mu, \pi, \Pi$ are as defined earlier. The most studied setting is when 
\[\mathsf{c}(\pi)=\int c(x-y) \, \dd \pi(x,y) \, ,
\label{eq:cost}
\]
 in which case, under mild regularity assumptions, minimisers exist and are of Monge-form, that is $\pi = (x,T(x))_{\#}\lambda$ for some map $T\colon \R^d\to \R^d$ (see \cite[Theorem 2.12]{Villani}). We think of $\mathsf{c}$ as a perturbation of the quadratic cost ($c(\cdot)=\lvert \cdot \rvert^2$ in~\eqref{eq:cost}) and will view minimisers of $\mathsf{c}$ as ``almost quasi-minimisers'' of the quadratic cost. In order to discuss our results further, we make our assumptions precise.

We start by fixing $\lambda,\mu \in \mathcal{M}(\R^d)$ with $\lambda(\R^d)=\mu(\R^d)$ such that $\lambda,\mu$ have densities which are bounded above and away from $0$ on some closed ball. We then consider a cost $\mathsf{c}:\Pi(\lambda,\mu) \to \R$. Consider now $\kappa\in K$, with $K$ a compact subset of $(0,\infty)$ containing $\lambda(0)^{-1}$, $b\in \R^d$, $A\in \R^{d\times d}$ positive-definite and symmetric,  and $\gamma\in G$ where $G$ is compact subset of $(0,\infty)$ which contains $\left(\frac{\lambda(0)}{\mu(0)}\right)^\frac 1 d$ in its interior\footnote{We conflate densities and measures here and from now on whenever it is convenient without further comment.}. Let us denote the set of all such $\mathsf{s}:=(A,b,\gamma,\kappa)$ by $\mathscr{S}$, the set of admissible rescalings. For any given $\mathsf{s}=(A,b,\gamma,\kappa)$, we also define the following objects:
\begin{gather}
Q(x,y)= (Q_1(x),Q_2(y))=(A^{-1}x, \gamma A(y-b))\\
\lambda_{\mathsf{s}}:=\kappa (Q_{1})_{\#}\lambda,\quad \mu_{\mathsf{s}}:= \kappa (Q_{2})_{\#}\mu,\label{eq:rescalings} \\
\pi_{\mathsf{s}}:=\kappa^2Q_{\#}\pi\, .
\end{gather}
% which then allows us to define the following family of cost functions:
% \begin{equation}
% \mathscr{C}:= \left\{\mathsf{c}_{\mathsf{s}}:=\left.\bar{\mathsf{c}}\right|_{\Pi(\lambda_{\mathsf{s}},\mu_{\mathsf{s}})}:\mathsf{s}\in \mathscr{S} \right\}
% \end{equation}

 We now make certain assumptions on the cost $\mathsf{c}$ for all $R \leq R_0$ such that $(\e/R_0)^2 \ll (R/R_0)^2$, for some macroscopic length scale $R_0 \in (0,\infty)$. We call such $R$ admissible. We assume that there exist $C,\delta>0$, independent of the choice of $\mathsf{s}\in \mathscr{S}$ and of the choice of an admissible $R$, such that the following assumptions hold:
\begin{enumerate}
% \item If $\pi \in \Pi(\lambda,\mu)$ is a minimiser of $\mathsf{c}$, then $\pi_{\mathsf{s}}$ minimises $\mathsf{c}_{\mathsf{s}}$, for all $\mathsf{s}\in \mathscr{S}$. \textcolor{blue}{Is this assumption necessary?}
\item If $\pi \in \Pi(\lambda,\mu)$ is a minimiser of $\mathsf{c}$, for any $\mathsf{s}\in \mathscr{S}$,  $\pi_{\mathsf{s}} \in \Pi(\lambda_{\mathsf{s}},\mu_{\mathsf{s}}) $ is an almost quadratic quasi-minimiser. To be more precise, for any $\hat \pi =\tilde \pi+\restr{\pi_{\mathsf{s}}}{\#_R^c}\in \Pi(\lambda_{\mathsf{s}},\mu_{\mathsf{s}})$,
\begin{align}\label{eq:quasimin}
\int_{\#_R} \lvert x-y\rvert^2 \dd \pi_{\mathsf{s}}-\int \lvert x-y\rvert^2 \, \dd \tilde \pi \leq C\e^2 \pi_{\mathsf{s}}(\#_R)+\delta \int_{\#_{2R}} \lvert x-y\rvert^2 \dd\pi_{\mathsf{s}} \, .
\end{align}

\item The energy carried by long trajectories is small: If $\pi\in\Pi(\lambda,\mu)$ is a minimiser of $\mathsf{c}$, then there exist $\Lambda>0$ such that for all $\mathsf{s}\in \mathscr{S}$, if $E(\pi_{\mathsf{s}},2R)+D_{\lambda_{\mathsf{s}},\mu_{\mathsf{s}}}(2R)\ll1$ (see~\eqref{eq:EDdef}), then,
\begin{align}\label{eq:longTraj}
\frac 1 {R^{d+2}}\int_{\#_R\cap \{\lvert x-y\rvert \geq \Lambda R\}} \lvert x- y\rvert^2 \dd \pi_{\mathsf{s}}\leq C \delta E(\pi_{\mathsf{s}},2R).
\end{align}
\end{enumerate}

\begin{remark}
Almost quadratic quasi-minimality is usually formulated in the following form: $\pi\in \Pi(\Lambda,\mu)$ is an almost quadratic quasi-minimiser if there exist $C,\delta>0$ such that for any $\hat \pi= \tilde \pi + \restr{\pi}{\#_R^c}\in \Pi(\lambda,\mu)$,
\begin{align}\label{eq:quasiminStandard}
\int_{\#_R}|x-y|^2\dd\pi-(1+\delta)\int |x-y|^2 \dd\tilde \pi\leq C \e^2 \pi(\#_R). 
\end{align}
Choosing $\tilde \pi$ to be a quadratic optimiser for the marginal constraints imposed by $\hat \pi\in \Pi(\lambda,\mu)$ it is straightforward to see that \eqref{eq:quasiminStandard} implies \eqref{eq:quasimin}.
\end{remark}

We will we show in Section \ref{sec:application_to_entropic_optimal_transport} that all of the above assumptions are satisfied for the entropic optimal transport problem~\eqref{eq:eot}.  Under Assumptions (i) and (ii), our goal is to prove the following large-scale $\e$-regularity theorem.
\begin{theorem}\label{thm:main}
Suppose $\lambda$ and $\mu$ admit $C^{0,\alpha}$-densities for some $\alpha\in(0,1)$. Assume $\mathsf{c}$ satisfies Assumptions (i) and (ii), $\pi$ is a minimiser of $\mathsf{c}$ in $\Pi(\lambda,\mu)$, and $E,D_{\lambda,\mu}$ are as defined in Theorem~\ref{thm:entropicmain}. Then, for any $\alpha\in (0,1)$, there exists some $\e_1>0$ such that if for some $R_0>0$, $\lambda,\mu$ are bounded away from $0$ on $B_{R_0}$, and
\begin{align}
E(\pi,R_0)+D_{\lambda,\mu}(R_0)+\frac{\e^2}{R_0^2}+\delta <\e_1,
\label{eq:smallness}
\end{align}
then for any $\beta\in[0,\alpha]$, $r\leq R_0$ with $\left(\frac {r}{R_0}\right)^{2+2\beta}\gg \left(\frac{\e}{R_0}\right)^2$, we have
\begin{align}
\min_{A\in \R^{d \times d},b\in \R^d} \frac 1 {r^{d+2+2\beta}}\int_{\#_r} \lvert y-Ax-b\rvert^2 \dd \pi\lesssim R_0^{-2\beta}\left(E(\pi,R_0)+D_{\lambda,\mu}(R_0)\right)+\frac{\e^2}{r^{2+2\beta}} \, .
\end{align}.
\end{theorem}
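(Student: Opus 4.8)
The plan is to run a \emph{Campanato/excess-decay iteration} at the large scale, following the De Giorgi-style $\e$-regularity scheme for (quasi-)minimisers of quadratic optimal transport (as in Goldman--Otto and its descendants), but tracking carefully the two non-dimensional parameters $r/R_0$ and $\e/R_0$ so that the iteration only needs to close down to the entropic length-scale. The central object is the \emph{tilt-excess}
\begin{align}
\mathcal{E}(\pi,r):=\min_{A\in\R^{d\times d},\,b\in\R^d}\frac{1}{r^{d+2}}\int_{\#_r}\lvert y-Ax-b\rvert^2\,\dd\pi,
\end{align}
together with the data term $D_{\lambda,\mu}(r)$. The heart of the matter is a \emph{one-step improvement lemma}: there exist $\theta\in(0,1)$ small, $C>0$, and a threshold $\e_1>0$ such that for any admissible radius $R$ with $E(\pi_{\mathsf{s}},R)+D_{\lambda_{\mathsf{s}},\mu_{\mathsf{s}}}(R)+\delta+\e^2/R^2<\e_1$, one has
\begin{align}
\mathcal{E}(\pi_{\mathsf{s}},\theta R)\leq \tfrac{1}{2}\theta^{2\alpha}\,\mathcal{E}(\pi_{\mathsf{s}},R)+C\,D_{\lambda_{\mathsf{s}},\mu_{\mathsf{s}}}(R)+C\,\delta\,E(\pi_{\mathsf{s}},R)+C\,\frac{\e^2}{R^2},
\end{align}
uniformly over $\mathsf{s}\in\mathscr{S}$. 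Granting this, Theorem~\ref{thm:main} follows by the standard iteration: after an initial affine change of variables one may assume the optimal $(A,b)$ at scale $R_0$ is close to the identity (so that $\pi_{\mathsf{s}}$ falls in the regime where Assumptions (i), (ii) apply), then one iterates the improvement lemma along the geometric sequence of radii $R_0,\theta R_0,\theta^2 R_0,\dots$, stopping at the first $r$ with $(r/R_0)^{2+2\beta}\sim(\e/R_0)^2$; summing the geometric series in the inhomogeneous terms produces exactly the right-hand side $R_0^{-2\beta}(E(\pi,R_0)+D_{\lambda,\mu}(R_0))+\e^2/r^{2+2\beta}$, the $\e^2$-term being the one that is \emph{not} allowed to decay and hence forces the stopping scale.

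To prove the improvement lemma I would proceed in four substeps. First, a \emph{harmonic approximation} step: by Assumption (i) the rescaled $\pi_{\mathsf{s}}$ is, up to the error $C\e^2\pi_{\mathsf{s}}(\#_R)+\delta\int_{\#_{2R}}\lvert x-y\rvert^2\,\dd\pi_{\mathsf{s}}$, a genuine quadratic quasi-minimiser on $\#_R$; combined with Assumption (ii), which lets us discard long trajectories paying only $C\delta E$, we are in the situation where the Benamou--Brenier / Eulerian interpolation between the two marginals is $L^2$-close to a solution of $-\Delta\phi=\mathrm{const}$ with Neumann-type boundary data governed by $\lambda_{\mathsf{s}}-\mu_{\mathsf{s}}$. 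This is precisely where the Hölder data term $D_{\lambda,\mu}(R)\sim R^{2\alpha}([\lambda]_\alpha^2+[\mu]_\alpha^2)+|\lambda(0)-\mu(0)|^2$ enters as the inhomogeneity. Second, \emph{quantified closeness}: one shows $\mathcal{E}$ at scale $R$ controls the $L^2$-distance of the displacement $y-x$ (restricted to $\#_R$, after subtracting the optimal affine map) to the gradient of this harmonic function, with error $\lesssim E+D+\delta E+\e^2/R^2$. Third, \emph{interior regularity of the limit}: a harmonic function $\phi$ on $\#_R$ satisfies the Campanato estimate $\min_{\text{affine }\ell}\frac{1}{(\theta R)^{d+2}}\int_{\#_{\theta R}}\lvert\nabla\phi-\nabla\ell\rvert^2\lesssim\theta^2\cdot\frac{1}{R^{d+2}}\int_{\#_R}\lvert\nabla\phi-\text{avg}\rvert^2$ (in fact one gets $\theta^2$, better than the claimed $\theta^{2\alpha}$, which is what allows absorbing $\frac12\theta^{2\alpha}$). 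Fourth, \emph{transferring back}: choosing the new $(A,b)$ to be the second-order Taylor data of $\phi$ at the centre, one estimates $\mathcal{E}(\pi_{\mathsf{s}},\theta R)$ by the sum of the harmonic Campanato term and the approximation error, which yields the displayed one-step inequality after fixing $\theta$ small and then $\e_1$ small.

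There are two bookkeeping points that require care but are not conceptual obstacles. One is \emph{scaling covariance}: the whole argument must be carried out for $\pi_{\mathsf{s}}$ with $\mathsf{s}\in\mathscr{S}$ ranging over the admissible affine rescalings, and one has to check that the class $\mathscr{S}$ is stable under composition with the zoom-in $x\mapsto x/\theta$ and the renormalisation of densities, so that iterating the lemma stays within the regime where Assumptions (i) and (ii) hold; the compactness of $K$ and $G$ and the openness of the containments $\lambda(0)^{-1}\in K$, $(\lambda(0)/\mu(0))^{1/d}\in\mathring G$ are exactly what make this work, since the optimal affine data at each scale converges and perturbs these ratios only slightly. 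The other is the \emph{$\beta$-interpolation}: the statement is uniform in $\beta\in[0,\alpha]$, so the right weight $r^{d+2+2\beta}$ and the decay factor $\theta^{2\beta}$ must be threaded through; since the harmonic Campanato gain is $\theta^2\geq\theta^{2\beta}$ for all $\beta\le1$, the same improvement lemma serves every $\beta$, and only the geometric summation of the $D$-term (which decays like $R^{2\alpha}\ge R^{2\beta}$) and the stopping condition change with $\beta$.

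The main obstacle I anticipate is making the harmonic approximation step \emph{quantitative and uniform} in the rescaling $\mathsf{s}$ and in the (small but nonzero) parameters $\e/R$ and $\delta$ simultaneously: one needs a compactness-free, or carefully quantified compactness, version of ``quasi-minimisers of quadratic OT are close to harmonic gradients at unit scale,'' with all constants independent of $\mathsf{s}\in\mathscr{S}$ — here the uniformity of the constants $C,\delta$ in Assumptions (i)--(ii) over $\mathscr{S}$ is essential, and one has to verify that the error terms $C\e^2\pi_{\mathsf{s}}(\#_R)$ and $\delta\int_{\#_{2R}}\lvert x-y\rvert^2\,\dd\pi_{\mathsf{s}}$ are genuinely of lower order than the excess being improved. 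A secondary subtlety is that, because the $\e^2/R^2$ contribution \emph{cannot} be made to decay, the iteration is not a clean geometric contraction all the way to $r=0$; one must instead stop at the entropic scale and show the accumulated error there is still controlled, which is the origin of the hypothesis $(r/R_0)^{2+2\beta}\gg(\e/R_0)^2$ and of the $\e^2/r^{2+2\beta}$ term on the right-hand side.
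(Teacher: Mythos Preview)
Your strategy matches the paper's: harmonic approximation (Proposition~\ref{prop:harmonicApproximation}) feeding a one-step improvement (Theorem~\ref{thm:onestep}), then Campanato iteration along $r_k=\theta^k R_0$ while tracking that the composed rescalings $\mathsf{t}_k$ stay in $\mathscr{S}$.

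There is, however, one concrete ingredient you have not identified, and the paper singles it out as the main change relative to the existing quasi-minimiser theory. Assumption~(ii) only controls \emph{very} long trajectories, those with $|x-y|\ge\Lambda R$. To run the harmonic approximation of \cite{Koch2023} one also needs to control the \emph{medium-length} range $\rho\le|x-y|\le\Lambda R$ with $E\ll\rho^{d+2}\ll R^{d+2}$; in the $C^{2,\alpha}$-setting of \cite{Otto2021} this comes from $L^p$-bounds ($p>1$) on the displacement, which are unavailable under the present hypotheses. The paper fills this gap with Lemma~\ref{lem:soft}, a weak-type estimate $\pi(\{|x-y|\ge\rho\}\cap\#_{R-1})\lesssim\Delta_R R^d/\rho^{d+2}$ proved directly from quasi-minimality via an explicit mass-swapping competitor. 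Without it the replacements for \cite[Lemma~2, Lemma~5]{Koch2023}---namely \eqref{eq:Linfty1}, \eqref{eq:Linfty2}, \eqref{eq:localMin}---do not close, and your substeps~1--2 stall: saying ``Assumption~(ii) lets us discard long trajectories'' is not enough to reach the regime where the Eulerian formulation applies. A secondary subtlety you underplay is that the uniform control of $[\lambda_k]_{\alpha,r_k}+[\mu_k]_{\alpha,r_k}$ along the iteration reduces to bounding a product $\prod_k(1+\theta^{\alpha k}+C r_{k-1}^{-1}\e)$ whose $\e$-factors are \emph{not} summable; one must exploit that the iteration halts at $r_k\gg\e$ and split the product at a crossover index, which is more delicate than ``perturbs these ratios only slightly'' suggests.
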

We highlight that we explicitly allow for the case $\beta=0$ in Theorem \ref{thm:main}, in which case the regularity result holds all the way down to scale $O(\e)$. In particular, in the case of entropic regularisation (see Theorem~\ref{thm:entropicmain}), heuristically one would expect that at this scale the smoothing effect of the entropy dominates and smoothness propagates to arbitrarily small scales. However, we do not pursue this direction in this paper. We also note that in what follows we will write $D(R)$ for $D_{\lambda,\mu}(R)$ whenever it is clear from context which measures we are referring to.

Theorem \ref{thm:main} follows by carefully revisiting the regularity theory of quasi-minimisers studied in \cite{Otto2021}. The main difference in our approach is that in \eqref{eq:quasimin} quasi-minimality is viewed through the lense of $C^2$-scaling, rather than the $C^{2,\alpha}$-scaling studied in \cite{Otto2021}. The restriction to scales $\left(\frac r R\right)^{2+2\beta}\gg \left(\frac \e R\right)^2$ in Theorem \ref{thm:main} is a consequence of this. However, our assumption (and result) is natural in the context of entropic regularisation, where below the regularisation scale $\e$, the entropic term becomes dominant.

The main change in the proof of Theorem \ref{thm:main} relative to the regularity theory for $C^{2,\alpha}$-quasiminimisers in \cite{Otto2021} lies in the unavailability of the $L^p$-bounds for any $p>1$ for quasi-minimisers in our setting. This is replaced by Assumption (ii) which is satisfied by entropic minimisers. However, we still have to control trajectories of `medium' length. This is accomplished by the following lemma.

\begin{lemma}\label{lem:soft}
Fix $R>1$. Suppose $\pi$ is a quadratic quasi-minimiser in the sense that for any $\tilde \pi= \hat \pi + \restr{\pi}{\#_R^c}$ and some $\Delta_R>0$,
\begin{align}
\int_{\#_R} \lvert x-y\rvert^2 \dd \pi-\int \lvert x-y\rvert^2 \dd\hat\pi \leq \Delta_R \,
\end{align}
 and assume that
\begin{align}\label{cw03}
E(\pi,R)\ll \rho^{d+2}\ll R^{d+2} \, .
\end{align}
Then for any such $\rho$ with $D(\rho)\ll 1$,
\begin{align}\label{cw01}
\pi\big(\big\{(x,y)\in \#_{R-1}\cap \supp\,\pi\,\colon
\,\lvert x-y\rvert\ge \rho\big\}\big)\lesssim\frac{\Delta_R R^d}{\rho^{d+2}} \, .
\end{align}
\end{lemma}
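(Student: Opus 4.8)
The plan is to run a layer-cake / dyadic decomposition of the set of "long" trajectories inside $\#_{R-1}$ and to bound the mass at each dyadic length by constructing an explicit competitor $\hat\pi$ that "shortcuts" those trajectories; the quasi-minimality inequality then feeds back a bound on the mass. Concretely, for $k \ge 0$ let $A_k := \{(x,y)\in \#_{R-1}\cap\supp\pi : 2^k\rho \le |x-y| < 2^{k+1}\rho\}$, and write $m_k := \pi(A_k)$. For each fixed $k$ I want to modify $\pi$ only on (a piece of) $A_k$: take a portion of the mass in $A_k$ and re-couple the $x$-marginal with the $y$-marginal so that the new trajectories are short, roughly of length $O(\rho)$. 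The standard device (as in \cite{Otto2021} and the Goldman--Otto circle of ideas) is: given two sub-probability measures of equal mass supported in a ball of radius $\sim 2^{k}\rho$ around a common point, one can re-couple them at a cost $\lesssim (2^k\rho)^2 \cdot(\text{mass})$; more to the point, one wants to glue the $x$-slice of $A_k$ to the $y$-slice of $A_k$ through a "short" coupling living at scale $\rho$, which is possible because both slices are, by the bound $E(\pi,R)\ll\rho^{d+2}$ together with $D(\rho)\ll 1$, comparable to Lebesgue measure on $B_R$ up to a factor and hence can be partitioned into cells of size $\rho$ carrying matched mass.

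The key quantitative step is the cost comparison. Replacing the mass $m_k$ on $A_k$ (where each trajectory has length $\ge 2^k\rho$, contributing energy $\ge m_k (2^k\rho)^2$ — well, at least we are removing trajectories of squared length $\ge (2^k\rho)^2$) by a short coupling of squared length $\lesssim \rho^2$ changes the quadratic cost by at most $-c\, m_k (2^k\rho)^2 + C m_k \rho^2 \le -c'\, m_k (2^k\rho)^2$ for $k$ large enough, plus lower-order terms coming from the mismatch of the two slices relative to Lebesgue, which are controlled by $E(\pi,R)$ and $D(\rho)$ and absorbed using the smallness hypotheses \eqref{cw03}. Feeding this into the quasi-minimality inequality $\int_{\#_R}|x-y|^2\dd\pi - \int |x-y|^2 \dd\hat\pi \le \Delta_R$ yields $c'\, m_k (2^k\rho)^2 \lesssim \Delta_R$, i.e. $m_k \lesssim \Delta_R/(2^{2k}\rho^2)$. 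Summing the geometric series over $k\ge 0$ gives $\pi(\{|x-y|\ge\rho\}\cap\#_{R-1}) = \sum_k m_k \lesssim \Delta_R/\rho^2$. The extra factor $R^d$ in \eqref{cw01} enters because to build the recoupling one must distribute the re-routed mass across $B_R$ (the available "room"), and the per-unit-mass cost of doing so while keeping trajectories at scale $\rho$ is $\lesssim \rho^2$ only after accounting for the volume $\sim R^d$ of $B_R$ over which one spreads; carrying the constants carefully produces the stated $\Delta_R R^d/\rho^{d+2}$ (equivalently, it is the regime where the $\rho^{d}$ from the cell count and the $R^d$ from the ambient volume do not cancel).

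The main obstacle is the construction of the competitor and making the cost accounting rigorous: one must (a) verify that both the $x$- and $y$-marginals of $\restr{\pi}{A_k}$, pushed to $B_R$, are absolutely continuous with densities close enough to constants that a partition into $\rho$-scale cells with matched mass exists — this is where $E(\pi,R)\ll\rho^{d+2}$ and $D(\rho)\ll1$ are used, controlling respectively how much mass sits on long trajectories (hence how far the slices deviate from the ambient marginals) and how far $\lambda,\mu$ deviate from constants at scale $\rho$; and (b) ensure the gluing is done so that $\hat\pi = \tilde\pi + \restr{\pi}{\#_R^c}$ really has the form required in the hypothesis, i.e. we only touch mass inside $\#_R$ and we preserve both marginals exactly. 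Getting the marginals to match exactly while keeping trajectories short typically requires a small additional "correction coupling" whose cost must also be shown to be lower order; this bookkeeping, rather than any conceptual difficulty, is the crux. Once the competitor is in hand, the dyadic summation is routine.
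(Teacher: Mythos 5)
Your proposal has a genuine gap at its central step. You propose to modify $\pi$ only on the dyadic slab $A_k$ by re-coupling the $x$- and $y$-marginals of $\restr{\pi}{A_k}$ so that the new trajectories have length $O(\rho)$, and you credit this short re-coupling as a saving of $\gtrsim m_k(2^k\rho)^2$ per unit mass. But any admissible competitor $\hat\pi = \tilde\pi + \restr{\pi}{\#_R^c}$ must preserve both marginals, so $\tilde\pi$ must couple exactly the same pair of measures that $\restr{\pi}{A_k}$ did. Nothing forces the $x$-slice and the $y$-slice of $A_k$ to overlap: if, say, all the long trajectories leaving a small region point in the same direction, the two slices are concentrated on sets separated by distance $\gtrsim 2^k\rho$, and \emph{any} coupling of them costs $\gtrsim (2^k\rho)^2$ per unit mass. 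Your appeal to the slices being ``comparable to Lebesgue on $B_R$'' does not repair this — $m_k$ is precisely the small quantity you are trying to bound, and the hypotheses $E(\pi,R)\ll\rho^{d+2}$, $D(\rho)\ll 1$ constrain the ambient marginals $\lambda,\mu$ and the total energy, not the location or shape of the $A_k$-slices. Consequently there is no $(2^k\rho)^2$ saving, the bound $m_k\lesssim\Delta_R/(2^{2k}\rho^2)$ is not available, and the conclusion $\sum_k m_k\lesssim\Delta_R/\rho^2$ you arrive at is actually \emph{stronger} than the lemma (by a factor $(R/\rho)^d\gg1$) and too strong to be true; the paragraph attributing the missing $R^d/\rho^d$ factor to ``spreading over $B_R$'' is not a proof and is trying to explain a factor your argument did not produce.

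The paper avoids this difficulty by not attempting to shorten the long trajectories directly. Instead it covers $B_{R-1}$ by $O((R/\rho)^d)$ balls of radius $r$ (a small fixed fraction of $\rho$), covers directions by cones $C_\rho$ of small aperture, and for each cell performs a \emph{swap}: mass in $B_r$ that was headed far into $C_\rho$ is rerouted to a nearby displaced ball $B'_{\tilde r}=B_{\tilde r}(3re_1)$, while the mass that currently makes short trips inside $B'_{\tilde r}$ is sent into $C_\rho$ instead. The existence of the matching radius $\tilde r$ uses $m\ll|B_r|$ (from $E(\pi,R)\ll r^{d+2}$) and $D(r)\ll1$. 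Expanding the squares reduces the cost change to the cross-term $(x'-x)\cdot(y-y')$, and the cone geometry gives $(x'-x)\cdot(y-x)\gtrsim r\rho$, i.e.\ a saving of only $\sim r\rho$ per unit mass rather than $\sim(2^k\rho)^2$. Quasi-minimality then gives $m\,r\rho\lesssim\Delta_R$, hence $m\lesssim\Delta_R/\rho^2$ per cell, and the $R^d/\rho^d$ factor comes from the covering number — not from a post hoc dyadic sum. If you want a dyadic refinement you could attempt it, but the per-cell local estimate, the swap competitor, and the cone geometry (none of which appear in your sketch) are the parts of the argument that actually make the quasi-minimality inequality bite.
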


Note that Lemma \ref{lem:soft} looks like a weak $L^{(d+2)^-}$-estimate, so one might expect to gain control of all trajectories of length $\gg E(\pi,R)$ from it. However, the restriction $\rho^{d+2}\ll R^{d+2}$, only enables to extract a control of trajectories of length at most $\lesssim R$.

Having Lemma \ref{lem:soft} and \eqref{eq:longTraj} at hand in order to replace $L^\infty$-bounds, as well as \eqref{eq:quasimin} to replace minimality it is straightforward to adapt the proof of the harmonic approximation result in \cite{Koch2023}.
\begin{proposition}\label{prop:harmonicApproximation}
Let $\pi$ be a minimiser for the cost $\mathsf{c}$. Assume $\lambda,\mu$ admit $C^{0,\alpha}$-densities in $B_{10}$, are bounded away from $0$ and $\lambda(0)=\mu(0)=1$. For every $\tau>0$, there exist $\epsilon(d,\tau)>0$ and $C_\tau=C(d,\tau),C=C(d)>0$
such that the following holds: If for $s\in \mathscr S$, $E(\pi_{\mathsf{s}},10)+D_{\lambda_{\mathsf{s}},\mu_{\mathsf{s}}}(10)+\e^2+\delta\leq \epsilon$ for some $\alpha\in(0,1)$, there exists $\phi$ harmonic such that
\begin{gather}
\int_{\#_1} |y-x-\nabla\phi(x)|^2 \dd \pi_{\mathsf{s}}\le\tau E(\pi_{\mathsf{s}},10)+C_\tau\left(D_{\lambda_\mathsf{s},\mu_\mathsf{s}}(10)+\e^2\right),\nonumber\\
\int_{B_1}|\nabla\phi|^2 \, \dd x\le C(E(\pi_{\mathsf{s}},10)+D_{\lambda_\mathsf{s},\mu_{\mathsf{s}}}(10)+ \e^2)).\label{eq:oneStep}
\end{gather}
\end{proposition}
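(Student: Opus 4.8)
\textbf{Proof proposal for Proposition~\ref{prop:harmonicApproximation}.}

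The plan is to follow the harmonic approximation scheme of \cite{Koch2023}, replacing the two ingredients that are used there but unavailable in our perturbed setting: exact minimality (replaced by the quasi-minimality \eqref{eq:quasimin}) and $L^\infty$/$L^p$-control of the displacement (replaced by the combination of Lemma~\ref{lem:soft} and the long-trajectory bound \eqref{eq:longTraj}). Throughout we work on the rescaled data $(\lambda_{\mathsf{s}},\mu_{\mathsf{s}},\pi_{\mathsf{s}})$ at scale $10$; by Assumption~(i) the rescaled plan $\pi_{\mathsf{s}}$ is an almost quadratic quasi-minimiser, so all estimates are uniform in $\mathsf{s}\in\mathscr{S}$ and in the admissible scale, as required. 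First I would record the Eulerian reformulation: associate to $\pi_{\mathsf{s}}$ (restricted to $\#_R$) the pair $(\rho,j)$ solving the continuity equation $\nabla\cdot j = \lambda_{\mathsf{s}} - \mu_{\mathsf{s}}$ in the relevant region, with $\int |j|^2/\rho$ controlled by $\int_{\#_R}|x-y|^2\,\dd\pi_{\mathsf{s}}$; this is the standard Benamou--Brenier type device used in \cite{Otto2021,Koch2023} to linearise the problem.

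Next I would split the displacement into short, medium and long trajectories at thresholds of the form $\rho$ (a mesoscale with $E \ll \rho^{d+2} \ll R^{d+2}$) and $\Lambda R$. The long part, $\{|x-y|\ge \Lambda R\}$, is directly controlled by \eqref{eq:longTraj}, which gives a contribution $\lesssim \delta E(\pi_{\mathsf{s}},2R)$ that can be absorbed into the $\tau E$ term once $\delta \le \epsilon$ is small. The medium part, $\{\rho \le |x-y| < \Lambda R\}$, is handled by Lemma~\ref{lem:soft}: applying it with $\Delta_R$ taken from the quasi-minimality inequality \eqref{eq:quasimin} (so $\Delta_R \lesssim \e^2 \pi_{\mathsf{s}}(\#_R) + \delta \int_{\#_{2R}}|x-y|^2\,\dd\pi_{\mathsf{s}} \lesssim R^{d+2}(\e^2 + \delta E)$ up to lower-order terms) yields a weak-type bound on the $\pi_{\mathsf{s}}$-mass of medium trajectories, hence on their quadratic cost, again of the right order. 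The short part is where the actual approximation takes place: one mollifies $(\rho,j)$ at scale $\rho$, uses that $\rho$ is close to the constant $1$ (thanks to the Hölder control encoded in $D_{\lambda_{\mathsf{s}},\mu_{\mathsf{s}}}(10)$ and $\lambda(0)=\mu(0)=1$), and compares with the solution of the constant-coefficient problem, i.e. $\nabla\cdot(\nabla\phi) = 0$, $\phi$ harmonic, obtained by a duality/orthogonality argument. The quasi-minimality \eqref{eq:quasimin} enters precisely here: it lets one replace $\pi_{\mathsf{s}}$ by a competitor built from $\nabla\phi$ (the harmonic flux) on $\#_R$ while paying only the error $C\e^2\pi_{\mathsf{s}}(\#_R) + \delta\int_{\#_{2R}}|x-y|^2\,\dd\pi_{\mathsf{s}}$, which is exactly the source of the $C_\tau(D + \e^2)$ and the $\delta$-absorbable terms on the right-hand side of \eqref{eq:oneStep}. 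The energy bound $\int_{B_1}|\nabla\phi|^2 \le C(E + D + \e^2)$ then follows from the construction of $\phi$ by elliptic estimates, since $\nabla\phi$ is essentially the harmonic part of the (mollified, normalised) flux whose energy is controlled by $E(\pi_{\mathsf{s}},10)$ plus the data terms.

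Finally I would collect the three contributions: short trajectories give $\tau E + C_\tau(D + \e^2)$ via the harmonic comparison, medium trajectories give $\lesssim (\e^2 + \delta E)$ via Lemma~\ref{lem:soft}, long trajectories give $\lesssim \delta E$ via \eqref{eq:longTraj}; choosing $\epsilon$ small (so that $E, D, \e^2, \delta$ are all small) and relabelling $\tau$ yields the claimed bound. The main obstacle, and the point requiring genuine care rather than citation, is the bookkeeping at the mesoscale $\rho$: one must choose $\rho$ so that simultaneously $E(\pi_{\mathsf{s}},10)\ll\rho^{d+2}$ (so Lemma~\ref{lem:soft} applies and the medium-trajectory mass is genuinely small), $\rho$ is small enough that the constant-coefficient approximation error is $\le \tau E$, and the quasi-minimality error terms—scaled correctly—do not overwhelm $\tau E$; reconciling these with the fact that we only have a weak-$L^{(d+2)^-}$ estimate (not an $L^p$ bound for $p>1$) is exactly the adaptation flagged before the statement, and is the step I expect to be most delicate. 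The uniformity in $\mathsf{s}$ is not an obstacle per se but must be threaded through every estimate, which is why the hypotheses are phrased for all $\mathsf{s}\in\mathscr{S}$.
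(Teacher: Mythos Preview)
Your proposal is correct and uses the same ingredients as the paper: the short/medium/long split with Lemma~\ref{lem:soft} for the medium range and \eqref{eq:longTraj} for the long range, quasi-minimality \eqref{eq:quasimin} in place of exact minimality, and the Eulerian machinery of \cite{Koch2023} for the actual harmonic comparison. The paper, however, organises the argument more surgically than you do. Rather than running the whole harmonic approximation with the three-regime split woven in, it first observes that minimality enters the proof of \cite{Koch2023} at exactly two places---\cite[Lemma~5]{Koch2023} (control of trajectories crossing $\partial B_R$) and \cite[Lemma~2]{Koch2023} (localisation of minimality)---and then proves drop-in replacements for each: the three-regime split yields the analogues of \cite[(66),(67)]{Koch2023}, while quasi-minimality plus a triangle-inequality argument with boundary measures $f_R,g_R$ on $\partial B_R$ gives the localised estimate replacing \cite[Lemma~2]{Koch2023}. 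Everything else in \cite{Koch2023} is then cited verbatim.

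The practical difference is that in your sketch the three-regime decomposition is tangled with the harmonic comparison itself (you place the mollification and constant-coefficient step inside the ``short'' branch), whereas in the paper the decomposition is used \emph{only} to produce the two substitute lemmas, after which the harmonic approximation runs unchanged. Your description of where quasi-minimality enters (``replace $\pi_{\mathsf{s}}$ by a competitor built from $\nabla\phi$'') is also a level too coarse: in the paper it is used to bound $\int_{\#_R}|x-y|^2\,\dd\pi_{\mathsf{s}}$ by $W_2^2(\lambda|_{B_R}+f_R,\mu|_{B_R}+g_R)$ plus error terms, and the harmonic $\phi$ is built afterwards, inside the unchanged \cite{Koch2023} argument. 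None of this is a gap---your plan would work---but the paper's modular structure makes the uniformity in $\mathsf{s}$ and the choice of the mesoscale $\rho$ (your ``main obstacle'') essentially automatic, since those choices are made once in the two replacement lemmas rather than threaded through the entire approximation.
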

We stress that in Proposition \ref{prop:harmonicApproximation}, $\epsilon$, $C_\tau$ and $C$ may be chosen independently of $\mathsf{s}\in \mathscr S$.

Proposition \ref{prop:harmonicApproximation} is the key to carrying out a Campanato iteration which ultimately gives us Theorem \ref{thm:main}. This part of the proof is similar to the one in \cite{Otto2021}.

\section{Proof of Theorem \ref{thm:main}}
\subsection{Controlling the mass of long trajectories}
In light of \eqref{eq:longTraj} we only control the energy contributed of very long trajectories, while implementing the strategy of \cite{Koch2023} will require to control the energy contribution by all trajectories of length at least $o(1)$. Hence, in light of \eqref{eq:longTraj}, we need to control the mass of trajectories of length $O(1)$. 

\begin{proof}[Proof of Lemma \ref{lem:soft}]
Since the statement is symmetric under exchanging the roles of $x$ and $y$, it is enough to show
\begin{align}
\pi\big(\big\{x\in B_{R-1}\;\mbox{and}\;\lvert x-y\rvert\ge \rho\big\}\big)\lesssim\frac{\Delta_R R^d}{\rho^{d+2}}.
\end{align}
Covering $B_{R-1}$ by $O(R^{-d} r^{-d})$ balls of radius $r$, which we think of as a small fraction
of $\rho$, and by translational symmetry, it is enough to show
\begin{align}
r^2\pi(B_r\times B_\rho^c)\lesssim \Delta_R\quad\mbox{provided}\;r\ll \rho.
\end{align}
Due to \eqref{cw03}, we may assume without loss of generality that $E(\pi,R)\ll r^{d+2}$. Covering the unit sphere by geodesic balls of radius $\alpha\ll 1$, and by rotational symmetry, it is enough to show that there exists
a universal (small and positive) $\beta$ with
\begin{align}\label{cw10}
&r^2\pi(B_r\times C_\rho)\lesssim \frac{\Delta_R}{R^d}\quad\mbox{provided}\;\beta\ll 1\\
&\mbox{where}\quad C_\rho:=B_\rho^c\cap\{{\textstyle\sqrt{\lvert y\rvert^2-y_1^2}}\le\beta y_1\},
\end{align}
where $\{\sqrt{\lvert y\rvert^2-y_1^2}\le\alpha y_1\}$ is a convex cone in direction $e_1$ of
opening angle $\alpha$.

\medskip

We now consider the ball $B'_r$ of radius $r$, $B_\bullet':=B_\bullet(3re_1)$.
By definition of $E(\pi,R)$, $E(\pi,R)\ll r^{d+2}$ implies
\begin{align}\label{cw11}
m:=\pi\big(B_r\times C_\rho)\leq\pi(B_r\times B_\rho^c)\ll \lvert B_r\rvert.
\end{align}
We now note that, 
\begin{align}
\pi(B_r'\times (B_{2r}')^c)\leq \pi(\{(x,y)\in B_r'\times \R^d\colon \lvert x-y\rvert \geq r\})\leq r^{-2} E(\pi,R)\ll \lvert B_r\rvert.
\end{align}
Consequently, using also that $D(r)$ is non-decreasing in $r$,
\begin{align}
&\lvert \pi(B_r'\times B_{2r}')-\lvert B_r\rvert\rvert \leq \lvert\pi(B_r\times \R^d)-\lvert B_r\rvert\rvert+\pi(B_r'\times (B_{2r})^c)\\
&\leq D(r) \lvert B_r\rvert+o(\lvert B_r\rvert)\ll \lvert B_r\rvert.
\end{align}

In particular, by continuity, there exists a radius $\tilde{r}\ll r$ such that
\begin{align}
\pi(B_{\tilde r}'\times  B_{2\tilde r}')=m.
\end{align}
This mass balance allows us to construct a competitor $\tilde\pi$ that instead
of sending the mass $m$ from $B_r$ into $C_\rho$
sends it into $B_{\tilde r}'$ and the excess mass from there into $C_\rho$. This involves
the initial measures
\begin{align}\label{cw07}
\int\zeta \,\dd \lambda_1:=\int_{B_r\times C_\rho}\zeta(x)\dd \pi,\quad%\mbox{and}\quad
\int\zeta \,\dd \lambda_1':=\int_{B_{\tilde r}'\times B_{\tilde r}'}\zeta(x)\dd \pi
\end{align}
and the corresponding target measures
\begin{align}\label{cw08}
\int\zeta \,\dd \mu_1:=\int_{B_r\times C_\rho}\zeta(y)\dd \pi,\quad%\mbox{and}\quad
\int\zeta \,\dd \mu_1':=\int_{B_{\tilde r}'\times B_{\tilde r}'}\zeta(y)\dd \pi,
\end{align}
which all have mass $m$. The competitor is defined as
\begin{align}
\int\zeta \, \dd \tilde\pi
&=\int_{((B_r\times C_\rho)\cup(B_{\tilde r}'\times B_{\tilde r}'))^c}\zeta \dd \pi\nonumber\\
&+\frac{1}{m}\int\int\zeta(x,y)\,\dd \lambda_1(x)\,\dd \mu_1'(y)
+\frac{1}{m}\int\int\zeta(x,y)\,\dd \lambda_1'(x)\,\dd \mu_1(y)
\end{align}
and clearly is non-negative; its admissibility can be inferred from
\begin{align}
\lefteqn{\int\zeta \dd(\pi-\tilde\pi)
=\int_{B_r\times C_\rho}\zeta \dd \pi
+\int_{B_{\tilde{r}}'\times B_{\tilde{r}}'}\zeta \dd \pi}\nonumber\\
&-\frac{1}{m}\int\int\zeta(x,y)\,\dd \lambda_1(x)\,\dd \mu_1'(y)
-\frac{1}{m}\int\int\zeta(x,y)\,\dd \lambda_1'(x)\,\dd \mu_1(y),
\end{align}
which also shows that the support of $\tilde\pi-\pi$ is contained
in $(B_r\cup B_{\tilde r}')\times(C_\rho\cup B_{2\tilde r}')$. Hence by quasi-minimality
\begin{align}\label{cw06}
\lefteqn{m\int_{B_r\times C_\rho}|x-y|^2\dd \pi\le m\Delta_R}\\
&+\int\int|x-y'|^2\,\dd \lambda_1(x)\,\dd \mu_1'(y')+\int\int|x'-y|^2\,\dd \lambda_1'(x')\,\dd \mu_1(y).
\end{align}

\medskip

Expanding the squares we have
\begin{align}
\frac{1}{2}\big(|x-y|^2+|x'-y'|^2-|x-y'|^2-|x'-y|^2\big)
=(x'-x)\cdot (y-y'),
\end{align}
which after elementary manipulations and using Young's inequality yields
\begin{align}
\lefteqn{(x'-x)\cdot(y-x)}\nonumber\\
&\le\frac{3}{2}|x'-x|^2+|x'-y'|^2+\frac{1}{2}\big(|x-y|^2-|x-y'|^2-|x'-y|^2)\\
\le& \frac{3}{2}|x'-x|^2+|x'-y'|^2+\frac{1}{2}\big(|x-y|^2-|x-y'|^2-|x'-y|^2)
\end{align}
We integrate this inequality with respect to
\begin{align}\label{cw08bis}
\frac{1}{m}I((x,y)\in B_r\times C_\rho)\dd \pi(x,y)\,\dd \lambda_1'(x')\,\dd \mu_1'(y')
\end{align}
and obtain by \eqref{cw06} and the definitions \eqref{cw07} and \eqref{cw08}
and the fact that all the measures have mass $m$
\begin{align}
&\int_{B_r\times C_\rho}\int(x'-x)\cdot(y-x)\,\dd \lambda_1'(x')\dd \pi(x,y)\le \frac{m}{2}\Delta_R\nonumber\\
&+\frac{3}{2}\int\int|x'-x|^2\,\dd \lambda_1'(x')\,\dd \lambda_1(x)
+\int\int|x'-y'|^2\,\dd \lambda_1'(x')\,\dd \mu_1'(y').
\end{align}
We note that, since $D(\rho)\ll 1$,
\begin{align}
\int_{B_{\tilde r}^\prime \times B_{\tilde r}^\prime}\lvert x-y\rvert^2 \dd \pi(x, y)\leq \tilde r^2 \pi(B_{\tilde r}^\prime \times \R^d)\ll r^2 m.
\end{align}

Since for $(x,y,x',y')$ in the support of \eqref{cw08bis}, that is,
for $(x,y,x',y')$ $\in B_r\times C_\rho\times B_{\tilde r}'\times B_{2\tilde r}'$,
we have by definition \eqref{cw10} of the cone $C_\rho$ provided $\alpha\ll 1$
\begin{align}
(x'-x)\cdot(y-x)\gtrsim r\rho,\quad
|x'-x|^2\lesssim r^2,\quad
|x'-y'|^2\lesssim r^2,
\end{align}

and since all measures have mass $m$, this yields by $r\ll \rho$,
\begin{align}
m r \rho\lesssim \Delta_R,
\end{align}
which in view of definition \eqref{cw11} amounts to \eqref{cw10}.
\end{proof}

\subsection{One-step improvement}
We want to use the harmonic approximation argument as explained in \cite{Koch2023} in order to obtain a one-step improvement. However as \cite{Koch2023} concerns minimisers of quadratic optimal transport, we need to modify the argument slightly. A careful inspection of \cite{Koch2023} shows that minimality is used at only two places: to control crossing trajectories \cite[Lemma 5]{Koch2023} and in order to localise minimality \cite[Lemma 2]{Koch2023}.

We begin by deriving the necessary replacements for \cite[Lemma 5, (66),(67)]{Koch2023}. We prove the estimate in Proposition \ref{prop:harmonicApproximation} for $\mathsf{s}=(\textup{Id},0,1,1)$ and drop the subscript $\mathsf{s}$ on $\pi_\mathsf{s}$. The proof for any other $\mathsf{s}\in \mathscr S$ is analogous and we remark that since the constants in Assumptions (i) and (ii) are independent of $\mathsf{s}$, all constants in the following are independent of the choice of $\mathsf{s}$. By scaling we may assume that $R_0=10$ and further that $E(\pi,10)+D(10)\ll 1$.  We first show that for any $\tau>0$ there is $C_\tau>0$ such that
\begin{align}\label{eq:Linfty1}
\int_2^3\int_{(x,y)\in \#_4\colon \exists t\; X(t)\in \partial B_R}\lvert x-y\rvert^2\dd \pi \dd R=\tau (E(\pi,10)+D(10))+C_\tau\e^2.
\end{align}
Here given $(x,y)\in \supp\; \pi$, we set $X(t)= (1-t)x+t y$.

Fix $\rho>0$ with $E(\pi,10)\ll \rho^{d+2}\ll 1$. We find
\begin{align}
&\int_2^3\int_{(x,y)\in \#_4\colon \exists t\; X(t)\in \partial B_R}\lvert x-y\rvert^2\dd \pi \dd R\\
\leq& \int_{(x,y)\in \#_4\colon \lvert x-y\rvert \geq 4\Lambda}\lvert x-y\rvert^2\dd \pi+\int_{(x,y)\in \#_4\colon \rho\leq \lvert x-y\rvert \leq 4\Lambda}\lvert x-y\rvert^2\dd \pi\\
&\quad+\int_2^3\int_{(x,y)\in \#_4\colon \lvert x-y\rvert \leq \rho \text{ and } \exists t\colon\; X(t)\in \partial B_R}\lvert x-y\rvert^2\dd \pi \dd R
\end{align}

Using the control of very long trajectories \eqref{eq:longTraj}, we control the first term by $C \delta E(\pi,10)$.
Using the quasi-minimality of $\pi$ \eqref{eq:quasimin} and Lemma \ref{lem:soft} with $R=5$, the second term is controlled by
\begin{align}
C \Lambda^2 \left(\frac{\e^2}{\rho^{d+2}}+\frac{\delta}{\rho^{d+2}}E(\pi,10)\right).
\end{align}
For future use, we note that we have shown for $E(\pi,10)\ll \rho^{d+2}\ll 1$,
\begin{align}\label{eq:Linfty}
\int_{\#_4\colon \rho\leq \lvert x-y\rvert}\lvert x-y\rvert^2\dd \pi\lesssim\frac{\e^2}{\rho^{d+2}}+\frac{\delta}{\rho^{d+2}} E(\pi,10).
\end{align}
Finally, we estimate the third term changing the order of integration by
\begin{align}
\rho \int_{\#_4} \lvert x-y\rvert^2\dd \pi.
\end{align}
Collecting estimates and choosing first $\rho$, then $\delta$ sufficiently small, this gives the desired estimate \eqref{eq:Linfty1}.
We further need to show
\begin{align}\label{eq:Linfty2}
\int_2^3 \pi(\{(x,y)\in \#_4\colon \exists t\; X(t)\in \partial B_R\})\dd R= o(1)+O(\e^2).
\end{align}
We find, again with $E(\pi,5)\ll \rho^{d+2}\leq \Lambda^{d+2}$,
\begin{align}
&\int_2^3 \pi(\{(x,y)\in \#_4\colon \exists t\; X(t)\in \partial B_R\})\dd R\\
&= \int_2^3 \pi(\{(x,y)\in \#_4\colon \exists t\; X(t)\in \partial B_R,\; \lvert x-y\rvert \leq \rho\})\dd R\\
&\quad+ \int_2^3 \pi(\{(x,y)\in \#_4\colon \exists t\; X(t)\in \partial B_R,\; \rho \leq \lvert x-y\rvert \leq 4\Lambda\})\dd R\\
&\qquad+ \int_2^3 \pi(\{(x,y)\in \#_4\colon \exists t\; X(t)\in \partial B_R,\; \lvert x-y\rvert \geq 4\Lambda\})\dd R
\end{align}
Changing the order of integration, the first term is estimated by
\begin{align}
\rho \pi(\#_3)\lesssim \rho.
\end{align}
The second term is controlled using Lemma \ref{lem:soft} with $R=5$ by
\begin{align}
C\Lambda^2\left(\frac{\e^2}{\rho^{d+2}}+\frac{\delta}{\rho^{d+2}} E(\pi,10)\right).
\end{align}
Finally, we control the third term using Markov's inequality and the control of very long trajectories \eqref{eq:longTraj} by
\begin{align}
\frac 1 {16\Lambda^2} \int_{\{(x,y)\in \#_4\colon \lvert x-y\rvert\geq 4\Lambda\}}\lvert x-y\rvert^2\dd \pi\lesssim \delta E(\pi,10).
\end{align}
This proves the claim \eqref{eq:Linfty2}.

We now turn to proving the replacement for \cite[Lemma 2]{Koch2023}: For any $\tilde \delta,\tau>0$, there is $C_{\tilde\delta},C_\tau>0$ such that
\begin{align}\label{eq:localMin}
\left(\int_{\#_R} \lvert x-y\rvert^2 \, \dd \pi(x,y)\right)^\frac 1 2\leq& W_2(\restr{\lambda}{B_R}+f_R,\restr{\mu}{B_R}+g_R)\\
&+C_{\tilde\delta}(\tau(E(\pi,10R)+D(10R))^\frac 1 2+ C_\tau\e).
\end{align}
Here $f_R$ and $g_R$ are defined via the relations
$$
\int_{\partial B_R} \xi \dd f = \int_{\{\exists t\colon X(t)\in \partial B_R\}} \xi(X(\sigma))\dd \pi, \quad \int_{\partial B_R} \xi \dd g = \int_{\{\exists t\colon X(t)\in \partial B_R\}} \xi(X(\tau))\dd \pi \quad 
$$
where $\sigma = \inf \{t>0\colon X(t)\in \bar B_R\}$ and $\tau = \sup \{t>0\colon X(t)\in \bar B_R\}$.

Let $(\tilde \lambda,\tilde \mu)$ be the marginals of $\restr{\pi}{\#_R}$. Let $\tilde \pi$ be the minimiser of $W_2(\tilde \lambda,\tilde \mu)$. Note that then $\tilde \pi+\restr{\pi}{\#_R^c}\in \Pi(\lambda,\mu)$. Hence by quasi-minimality \eqref{eq:quasimin},
\begin{align}
\int_{\#_R}\lvert x-y\rvert^2\dd \pi \leq&  W^2_2(\tilde \lambda,\tilde \mu)+C\e^2 \pi(\#_R)+C\delta E(\pi,10R).
\end{align}
Finally note that
\begin{align}
\pi(\#_R) \leq  \mu(B_4)+\lambda(B_4)\lesssim 1
\end{align}
to conclude for some $C>0$,
\begin{align}
\int_{\#_R}\lvert x-y\rvert^2\dd \pi \leq& W_2^2(\tilde \lambda,\tilde \mu)+C\e^2+C\delta R^{d+2}\int_{\#_{10R}}|x-y|^2\dd\pi .
\end{align}
Now write $\tilde \lambda = \restr{\lambda}{B_R}+\bar \lambda$, $\tilde \mu = \restr{\mu}{B_R}+\bar \mu$ and estimate using triangle inequality
\begin{align}
W_2(\tilde \lambda,\tilde\mu)\leq& W_2(\restr{\lambda}{B_R}+\bar \lambda,\restr{\lambda}{B_R}+f_R)+W_2(\restr{\lambda}{B_R}+f_R,\restr{\mu}{B_R}+g_R)\\
&+W_2(\restr{\mu}{B_R}+g_R,\restr{\mu}{B_R}+\bar \mu)\\
\leq& W_2(\bar \lambda,f_R)+W(\bar \mu,g_R)+W_2(\restr{\lambda}{B_R}+f_R,\restr{\mu}{B_R}+g_R).
\end{align}
In particular, by symmetry it suffices to estimate $W(\bar \lambda,f_R)$. Let $\pi_1$ be the plan that transports points according to the trajectory given by $\pi$, except that points entering $B_R$ are moved to the point where they cross the boundary. Formally,
\begin{align}
\int \xi(x,y)\dd \pi_1 = \int \xi(x,X(\sigma))I(x\in B_R, y\not\in B_R)\dd \pi.
\end{align}
Then
\begin{align}
W^2_2(\bar \lambda,f_R) \leq \int \lvert x-y\rvert^2 \dd \pi_1\leq \int_{(x,y)\colon \exists t\colon X(t)\in \partial B_R} \lvert x-y\rvert^2 \dd \pi.
\end{align}
Thus, applying \eqref{eq:Linfty1}, we obtain \eqref{eq:localMin} after collecting estimates.

With these items in hand, we can now follow \cite{Koch2023} replacing \cite[Lemma 2]{Koch2023} with \eqref{eq:localMin} and \cite[Lemma 5]{Koch2023} by \eqref{eq:Linfty1} and \eqref{eq:Linfty2} whenever necessary to prove Proposition \ref{prop:harmonicApproximation}. We remark that, since we are assuming $|\lambda(0)-\mu(0)|\leq \delta$, our definition of $D$ controls the notion of $D$ used in \cite{Koch2023}, see \cite[Lemma A.4]{Otto2021}.
%\harmonicApprox

With Proposition \ref{prop:harmonicApproximation} in hand, we now closely follow \cite[Proposition 1.16]{Otto2021} in order to obtain a one-step improvement. For the convenience of the reader, we provide a full proof.

\begin{theorem}\label{thm:onestep}
Let $\lambda$ and $\mu$ be measures of equal admitting $C^{0,\alpha}$-densities. Assume that $\lambda(0)=\mu(0)=1$.
Suppose $\pi\in \Pi(\lambda,\mu)$ is a minimiser of $\mathsf{c}$ and fix $\mathsf{s}\in \mathscr{S}$. Then for every $\beta\in(0,1)$, if $D_{\lambda_{\mathsf{s}},\mu_{\mathsf{s}}}(10R)+E(\pi_{\mathsf{s}},10R)+\frac{\e^2}{R^2}+\delta\ll 1 $, there exists $\theta\in(0,1)$, a symmetric matrix $A\in \R^{d\times d}$ with $\textup{det}\,A =1$ and a vector $b\in \R^d$ such that
\begin{align}\label{eq:Abestimates}
\lvert A-\textup{id}\rvert^2 + \frac 1 {R^2} \lvert b\rvert^2\lesssim E(\pi_{\mathsf{s}},10R)+D_{\lambda_{\mathsf{s}},\mu_{\mathsf{s}}}(10R)+ R^{-2}\e^2 
\end{align}
and
\begin{align}
E(\hat\pi,\theta R)\leq \theta^{2\beta} E(\pi_{\mathsf{s}},10R)+ C_\theta D_{\lambda_{\mathsf{s}},\mu_{\mathsf{s}}}(10R)+ C_\theta R^{-2}\e^2.
\end{align}
where $\hat\pi$ is obtained from $\pi$ as follows: Let $\gamma = \mu(b)^\frac 1 d$ , define $\hat{\mathsf{s}}=(A,b,\gamma,0)$, and define $\hat{\pi}=(\pi_{\mathsf{s}})_{\hat{\mathsf{s}}},\hat{\lambda}=(\lambda_{\mathsf{s}})_{\hat{\mathsf{s}}},\hat{\mu}=(\mu_{\mathsf{s}})_{\hat{\mathsf{s}}}$ (see~\eqref{eq:rescalings}). Equivalently, at the level of the densities we have
\begin{align}
\left(\begin{matrix} \hat x\\ \hat y\end{matrix}\right)= Q\left(\begin{matrix} x\\ y\end{matrix}\right) = \left(\begin{matrix} A^{-1} x\\ \gamma A(y-b)\end{matrix}\right)
\end{align}
and
\begin{align}
\hat \pi = Q_{\#}\pi_{\hat{\mathsf{s}}},\quad \hat\lambda(\hat x)=\lambda_{\mathsf{s}}(x),\quad \hat\mu(\hat y)=\gamma^{-d}\mu_{\mathsf{s}}(y) \, ,
\end{align}
where again $\hat{\lambda}(0)=1=\hat{\mu}(0)$. Moreover,
\begin{align}\label{eq:LEstimate}
\lvert \gamma-1\rvert^2 \lesssim E(\pi_{\mathsf{s}},10R)+D_{\lambda_{\mathsf{s}},\mu_{\mathsf{s}}}(10R)+\e^2 R^{-2} .
\end{align}
\end{theorem}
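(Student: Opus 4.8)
The plan is to derive Theorem~\ref{thm:onestep} from the harmonic approximation result, Proposition~\ref{prop:harmonicApproximation}, by the usual Campanato-type argument: one applies harmonic approximation on the scale $10R$, reads off a good affine map from the harmonic function $\phi$ at the origin, and then performs the affine change of variables $Q$ to straighten the plan so that the residual energy decays by a factor $\theta^{2\beta}$. Throughout I would reduce to $\mathsf{s}=(\mathrm{Id},0,1,1)$ and $R=10$ by the scaling invariance already exploited in the proof of Proposition~\ref{prop:harmonicApproximation}, noting that all constants are $\mathsf{s}$-independent; the only mild bookkeeping is to track how $E$, $D$ and $\e^2 R^{-2}$ transform so the final statement is in the claimed form.

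Concretely, first I would invoke Proposition~\ref{prop:harmonicApproximation} with a parameter $\tau$ to be chosen small (depending on $\theta$, $\beta$): this yields a harmonic $\phi$ on $B_1$ with $\int_{\#_1}|y-x-\nabla\phi(x)|^2\,\dd\pi\le \tau E(\pi,10)+C_\tau(D(10)+\e^2)$ and $\int_{B_1}|\nabla\phi|^2\le C(E(\pi,10)+D(10)+\e^2)$. Second, I would set $b:=\nabla\phi(0)$ and let $A$ be (the symmetric positive-definite representative of) the map built from $\nabla^2\phi(0)$, normalised to have determinant one — the standard trick is to take the symmetric part of $\mathrm{Id}+\nabla^2\phi(0)$ and rescale; interior estimates for harmonic functions give $|\nabla\phi(0)|^2+|\nabla^2\phi(0)|^2\lesssim \int_{B_1}|\nabla\phi|^2$, which is exactly \eqref{eq:Abestimates}, and $|\gamma-1|^2\lesssim |b|^2\lesssim$ the right-hand side gives \eqref{eq:LEstimate} once $\gamma=\mu(b)^{1/d}$ is Taylor-expanded using the $C^{0,\alpha}$-bound on $\mu$ and $\mu(0)=1$. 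Third, Taylor-expanding $\nabla\phi(x)=b+\nabla^2\phi(0)x+O(|x|^{1+\alpha'})$ on $B_\theta$ (any $\alpha'<1$, using harmonic estimates on the third derivatives) and combining with the harmonic approximation bound shows that after the change of variables $Q$ the energy $E(\hat\pi,\theta\cdot 10)$ — or in the reduced normalisation $E(\hat\pi,\theta)$ — is bounded by $C\theta^{-d-2}\tau E(\pi,10)+C\theta^2\int_{B_1}|\nabla\phi|^2+C_\tau\theta^{-d-2}(D(10)+\e^2)$, where the $\theta^2$ comes from the fact that $\nabla^2\phi$ is constant to leading order so the affine map removes the leading term and the next term contributes the gain. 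Choosing $\theta$ small so that $C\theta^2\le \tfrac12\theta^{2\beta}$ (possible since $\beta<1$), and then $\tau=\tau(\theta)$ small so that $C\theta^{-d-2}\tau\le \tfrac12\theta^{2\beta}$, yields $E(\hat\pi,\theta R)\le \theta^{2\beta}E(\pi_{\mathsf{s}},10R)+C_\theta(D_{\lambda_{\mathsf{s}},\mu_{\mathsf{s}}}(10R)+R^{-2}\e^2)$.

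The main obstacle, and the place where the entropic/perturbative setting genuinely differs from \cite{Otto2021}, is controlling how the energy changes under the affine push-forward $Q$: one must show that replacing $\pi$ by $Q_{\#}\pi$ does not create spurious energy from the part of $\pi$ supported outside $\#_1$ or from medium-length trajectories, since we lack the $L^p$-bounds on the plan available in \cite{Otto2021}. This is precisely what \eqref{eq:Linfty} and \eqref{eq:Linfty1}–\eqref{eq:Linfty2} (consequences of Lemma~\ref{lem:soft}, the quasi-minimality \eqref{eq:quasimin} and the long-trajectory bound \eqref{eq:longTraj}) are designed to absorb: trajectories of length $\gtrsim\rho$ carry energy at most $C(\e^2\rho^{-(d+2)}+\delta\rho^{-(d+2)}E)$, which after choosing $\rho$ and then $\delta$ small is subsumed into the $C_\theta(D+\e^2 R^{-2})$ and the $\theta^{2\beta}E$ terms respectively. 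The secondary technical point is that the change of variables $Q$ distorts the ball $B_{\theta R}$ into an ellipsoid and distorts the marginals; but since $|A-\mathrm{Id}|$ and $|\gamma-1|$ are controlled by the small quantity on the right-hand side of \eqref{eq:Abestimates}, this distortion is a lower-order perturbation, and the new marginals $\hat\lambda,\hat\mu$ still satisfy the normalisation $\hat\lambda(0)=\hat\mu(0)=1$ with $C^{0,\alpha}$-seminorms and offset $D$ controlled by $D_{\lambda_{\mathsf{s}},\mu_{\mathsf{s}}}(10R)$ up to a constant. Assembling these estimates in the order above gives the theorem.
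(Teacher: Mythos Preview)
Your proposal follows essentially the same route as the paper: reduce by scaling to $\mathsf{s}=(\mathrm{Id},0,1,1)$ and $R=1$, apply Proposition~\ref{prop:harmonicApproximation}, read off $b=\nabla\phi(0)$ and $A$ from $\nabla^2\phi(0)$, Taylor-expand on $B_\theta$, and then choose first $\theta$ and then $\tau$. Two points are worth sharpening. First, the paper takes $A=e^{-\nabla^2\phi(0)/2}$, which is automatically symmetric positive-definite with $\det A=e^{-\tfrac12\,\mathrm{tr}\,\nabla^2\phi(0)}=1$ by harmonicity of $\phi$; this avoids the ad hoc determinant normalisation you describe. Second, and more substantively, you misidentify the tool that handles the long-trajectory contribution to $E(\hat\pi,\theta)$. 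The estimates \eqref{eq:Linfty}, \eqref{eq:Linfty1}--\eqref{eq:Linfty2} and Lemma~\ref{lem:soft} are macroscopic-scale devices used \emph{inside} the proof of Proposition~\ref{prop:harmonicApproximation}, not at this stage; invoking \eqref{eq:Linfty} with $\rho\sim\theta$ would carry an extra factor $\theta^{-(d+2)}$ after normalising and would not close. The paper instead splits $\theta^{d+2}E(\hat\pi,\theta)$ according to $\{|\hat x-\hat y|\ge\Lambda\theta\}$ and its complement, and controls the former by applying Assumption~(ii), i.e.\ \eqref{eq:longTraj}, \emph{directly to $\hat\pi$ at scale $\theta$}, using that $\hat\pi$ is itself an admissible rescaling in $\mathscr S$. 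On the complement both $x$ and $y$ lie in $B_{C(\Lambda)\theta}$, so the Taylor argument you outline (together with the bound on $|\gamma-1|$) goes through and produces the $\theta^2$ gain.
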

\begin{proof}
The argument can be carried out exactly as in \cite[Proposition 1.16]{Otto2021} with the following modification: the harmonic approximation is replaced by Proposition \ref{prop:harmonicApproximation}. We prove the result for $\mathsf{s}=(\mathrm{Id},0,1,1)$ and drop the subscript $\mathsf{s}$ on $\pi_\mathsf{s}$, $\lambda_{\mathsf{s}}$ and $\mu_{\mathsf{s}}$. The proof for general $\mathsf{s} \in \mathscr{S}$ follows in an identical manner using Assumptions (i) and (ii) and noting that since the constants there are independent of $\mathsf{s}$, all constants in the following are independent of $\mathsf{s}$.

By scaling, we may assume that $R=1$.

Let $\tau>0$ to be chosen later. Let $C_\tau$, $\e_\tau$ and $\phi$ be as in Proposition \ref{prop:harmonicApproximation}. We set
\begin{align}
b=\nabla \phi(0),\qquad A=e^{-\nabla^2\phi(0)/2}.
\end{align}
Then by elliptic regularity and Proposition \ref{prop:harmonicApproximation},
\begin{align}
\lvert b\rvert^2+\lvert \nabla^2\phi(0)\rvert^2 \leq& \sup_{B_{1/2}(0)} \lvert\nabla \phi\rvert^2+\lvert \nabla^2\phi\rvert^2 \lesssim \int_{B_1(0)} \lvert \nabla \phi\rvert^2\, \dd x\\
\lesssim& E(\pi,10)+D(10)+\e^2 .
\end{align}
Thus \eqref{eq:Abestimates} holds. In particular,
\begin{align}\label{eq:lEstimate}
\lvert \gamma -1 \rvert \leq \lvert b\rvert^{2\alpha} [\mu]_{\alpha,10}^2\lesssim (1+\e^2+E(\pi,10)^\alpha) [\mu]_{\alpha,10}^2.
\end{align}
Applying Young's inequality, this gives \eqref{eq:LEstimate}.

Assume that  $E(\pi,10)+D(10)+\e^2\ll \theta^2$. Then
\begin{align}
Q^{-1}(\#_\theta) = Q^{-1}(B_\theta\times \R^d)\cup Q^{-1}(\R^d\times B_\theta)= A B_\theta\times \R^d \cup \left(\R^d \times \gamma^{-1} A^{-1} B_\theta+b\right)
\end{align}
Due to \eqref{eq:Abestimates} and \eqref{eq:LEstimate}, it follows that $A B_\theta \subset B_{2\theta}$ and $\gamma^{-1}A^{-1} B_\theta\subset B_{2\theta}$.

We estimate
\begin{align}
\lvert \gamma-\textup{id}(y-b)\rvert \leq \lvert \gamma -1\rvert (\lvert y\rvert+\lvert b\rvert).
\end{align}
Further, we note, using the Taylor approximation
\begin{align}
\lvert A^{-2} -\left( \textup{id}- \nabla^2\phi(0)\right)\rvert \leq  C \sup_{B_{1/2}}\lvert \nabla^3\phi\rvert^2
\end{align}
we obtain
\begin{align}
\lvert b+A^{-2} x-(x+\nabla \phi(x))\rvert\leq& \lvert \nabla \phi(0)+\nabla^2\phi(0)x-\nabla\phi(x)\rvert+\sup_{B_{1/2}}\lvert \nabla^3\phi\rvert^2\\
\lesssim&  \sup_{B_{1/2}} \lvert\nabla^3\phi\rvert \lvert x\rvert + \sup_{B_{1/2}} \lvert \nabla^3\phi\rvert^2
\end{align}

We now compute
\begin{align}
&\theta^{d+2} E(\hat \pi,\theta)\\
=& \int_{\#_\theta \cap \{\lvert \hat x-\hat y\rvert \geq \Lambda\theta\}}\lvert \hat x-\hat y\rvert^2 \hat \dd \hat\pi+ \int_{\#_\theta\cap \{\lvert \hat x-\hat y\rvert \leq \Lambda\theta\}} \lvert \hat x-\hat y\rvert^2 \dd \hat\pi\\
\lesssim& \int_{\#_\theta\cap \{\lvert \hat x-\hat y\rvert \geq \Lambda\theta\}}\lvert \hat x-\hat y\rvert^2  \dd \hat\pi+|A|^2\int_{Q^{-1}(\#_\theta)\cap \{\lvert x-y\rvert\leq C(\Lambda)\theta\}}\lvert \gamma(y-b)-A^{-2} x\rvert^2\dd \pi\\
\lesssim& \int_{\#_\theta\cap \{\lvert \hat x-\hat y\rvert \geq \Lambda\theta\}}\lvert \hat x-\hat y\rvert^2 \dd \hat\pi+\\
&\quad+ \int_{\#_{2\theta}\cap \{x,y\in B_{C(\Lambda)\theta}\}} \lvert y-x-\nabla \phi(x)\rvert^2 \dd \pi + \lvert \gamma-1\rvert^2 \int_{\#_{2\theta}} \lvert y\rvert^2+\lvert b\rvert^2 \dd \pi\\
&\qquad+ \int_{\#_{3\theta}\cap \{x\in B_{C(\Lambda)\theta}\}}  \sup_{B_{1/2}} \lvert\nabla^3\phi\rvert^2 \lvert x\rvert^4 + \sup_{B_{1/2}} \lvert \nabla^3\phi\rvert^4\dd \pi.
\end{align}
The last two terms we estimate using \eqref{eq:Abestimates} and \eqref{eq:LEstimate}, as well as elliptic regularity and Proposition \ref{prop:harmonicApproximation},
\begin{align}
&\lvert \gamma-1\rvert^2 \int_{\#_{2\theta}} \lvert y\rvert^2+\lvert b\rvert^2 \dd \pi+ \int_{\#_{3\theta}\cap \{x\in B_{3\theta}\}}  \sup_{B_{1/2}} \lvert\nabla^3\phi\rvert^2 \lvert x\rvert^4 + \sup_{B_{1/2}} \lvert \nabla^3\phi\rvert^4 \lvert x\rvert^2\dd \pi\\
\lesssim& \left(\tau E(\pi,10)+C_\tau [\mu]_{\alpha,10}^2\right)(\theta^{d+2}+\theta^d \left(E(\pi,10)+[\lambda]_{\alpha,10}^2+[\mu]_{\alpha,10}^2+\e^2 )\right)\\
&\quad + \left(E(\pi,10)+([\lambda]_{\alpha,10}^2+[\mu]_{\alpha,10}^2)\right)\theta^{d+4}+\left(E(\pi,10)+([\lambda]_{\alpha,10}^2+[\mu]_{\alpha,10}^2)\right)^2.
\end{align}
Due to the invariance under affine transformations, we may apply \eqref{eq:longTraj} to the first term. Applying Proposition \ref{prop:harmonicApproximation} to the second term, and using Young's inequality, we deduce that
\begin{align}
E(\hat \pi,\theta)\lesssim \tau \theta^{-d+2} E(\pi,10)+\theta^2 E(\pi,10)+\theta^{-2}([\lambda]_{\alpha,10}^2+[\mu]_{\alpha,10}^2)+\theta^{-2}\e^2.
\end{align}
Choosing first $\theta$ small and then $\tau$ sufficiently small, we obtain the claimed result.
\end{proof}

\subsection{Campanato iteration}
We are finally ready to prove our main theorem.

%\main*
\begin{proof}[Proof of Theorem \ref{thm:main}]

We focus on the case $\beta=0$ as the case $\beta>0$ is both easier and follows \cite{Otto2021} more closely.

 By making the following transformation 
\begin{align}
\lambda\to \lambda(0)^{-1}\lambda, \quad \mu\to \mu(0)^{-1}\mu\left(\left(\frac{\lambda(0)}{\mu(0)}\right)^{\frac1d}\cdot\right),
\end{align}
 we may assume that $\lambda(0)=\mu(0)=1$. Note in particular that the above rescaling 
 \[
 \bar{\mathsf{s}}=\left(\mathrm{Id},0,\left(\frac{\lambda(0)}{\mu(0)}\right)^{\frac1d},\lambda(0)^{-1}\right)
 \]
  lies in $ \mathscr{S}$ and thus Assumptions (i) and (ii) apply to it, i.e.  quasiminimality \eqref{eq:quasimin} is preserved and moreover, \eqref{eq:longTraj}. Before proceeding, we introduce the following notion of composition of scalings: given $\mathsf{s}_1=(A_1,b_1,\gamma_1,\kappa_1),\mathsf{s}_2=(A_2,b_2,\gamma_2,\kappa_2)$ such that $A_1,A_2$ are symmetric, positive-definite, $\mathrm{det}(A_1)=1=\mathrm{det}(A_2)$ and $\gamma_1,\gamma_2>0$, we define
  \begin{equation}
  \mathsf{s}_2 \diamond \mathsf{s}_1:= \left(A_2A_1,b_1+\gamma_2A_2 b_2 ,\gamma_2\gamma_1, \kappa_2\kappa_1\right) \, .
  \end{equation}
  The above notion of composition is chosen such that 
  \begin{align}
  (\lambda_{\mathsf{s}_1})_{\mathsf{s}_2}= \lambda_{\mathsf{s}_2 \diamond \mathsf{s}_1}, \quad (\mu_{\mathsf{s}_1})_{\mathsf{s}_2}=\mu_{\mathsf{s}_2 \diamond \mathsf{s}_1},\quad
  (\pi_{\mathsf{s}_1})_{\mathsf{s}_2}&= \pi_{\mathsf{s}_2 \diamond \mathsf{s}_1} \, ,
  \end{align}
  where $\lambda_{\mathsf{s}},\mu_{\mathsf{s}},\pi_{\mathsf{s}}$ are as defined in~\eqref{eq:rescalings}.

Set $R=R_0$.
 Before we can apply Theorem \ref{thm:onestep}, we need to check that 
\begin{equation}
 D_{\lambda_{\bar{\mathsf{s}}},\mu_{\bar{\mathsf{s}}}}(R)+E(\pi_{\bar{\mathsf{s}}},R)+\frac{\e^2}{R^2}+\delta\ll 1 \, .
 \label{eq:initialisation}
\end{equation}
By symmetry, we may assume that $\gamma = \left(\frac{\lambda(0)}{\mu(0)}\right)^\frac 1 d\geq 1$ (otherwise exchange the roles of $x$ and $y$),
\begin{align}
E(\pi_{\bar{\mathsf{s}}},R) = & \, \frac{1}{R^{d+2}}\int_{\#_{R}}|x-y|^2 \, \dd \pi_{\bar{\mathsf{s}}}\\
=& \frac{1}{\lambda(0)^2 R^{d+2}}\int_{\{B_{R}\times \R^d \}\cup \{\R^d \times B_{\gamma^{-1}R}\}}|x-\gamma y|^2 \, \dd \pi \\
\leq & \, \frac{2}{\lambda(0)^2 R^{d+2}}\int_{\{B_{R}\times \R^d \}\cup \{\R^d \times B_{\gamma^{-1}R}\}}|x-y|^2 \, \dd \pi \\&\,+ \frac{2(1-\gamma)^2}{\lambda(0)^2 R^{d+2}}\int_{\{B_{R}\times \R^d \}\cup \{\R^d \times B_{\gamma^{-1}R}\}}|y|^2 \, \dd \pi \, .\label{eq:pisisadmissible}
\end{align}
We estimate the terms on the right hand side separately. As $\gamma \geq 1$, the first term is controlled by $E(\pi,R)$. For the second term, we have, noting $D_{\lambda,\mu}(\gamma^{-1}R)\leq D_{\lambda,\mu}(R)$ as $\gamma\geq 1$,
\begin{align}
&\frac{2(1-\gamma)^2}{\lambda(0)^2 R^{d+2}}\int_{\{B_{R}\times \R^d \}\cup \{\R^d \times B_{\gamma^{-1}R}\}}|y|^2 \, \dd \pi\\
 \leq & \, \frac{2(1-\gamma)^2}{\lambda(0)^2 R^{d+2}}\left(\int_{B_{R}\times B_R }|y|^2 \, \dd \pi+\int_{B_R\times \R^d\setminus B_R}|y|^2\,\dd\pi + \int_{\R^d\times B_{\gamma^{-1}R}}|y|^2\,\dd\pi \right)\\
 \lesssim & \, \frac{(1-\gamma)^2}{\lambda(0)} + \frac{(1-\gamma)^2}{\lambda(0)^2}E(\pi,R)+\frac{(1-\gamma)^2}{\gamma^{2+2d} \lambda(0)}\, ,
\end{align}
However, note that $(1-\gamma)\lesssim D_{\lambda,\mu}(R)$. Hence, it remains to estimate the term $D_{\lambda_{\bar{\mathsf{s}}},\mu_{\bar{\mathsf{s}}}}(R)$ in~\eqref{eq:initialisation} which can clearly be controlled by $D_{\lambda,\mu}(R)$. Hence, Theorem \ref{thm:onestep} can be applied to $\pi_{\bar{s}}$ and we obtain
\begin{align}
E_1 =& E(\pi_{\mathsf{t}_1},\theta R)\leq \theta^{2\alpha} E(\pi_{{\bar{\mathsf{s}}}},R)+C_\theta R^{2\alpha}(([\lambda_{\bar{\mathsf{s}}}]_{\alpha,R}^2+[\mu_{\bar{\mathsf{s}}}]_{\alpha,R}^2))+C_\theta R^{-2}\e^2 \, ,
\end{align}
where $\mathsf{t}_1= \mathsf{s}_1\diamond \bar{\mathsf{s}}$ and $\mathsf{s}_1=(A_1,b_1,\gamma_1,\kappa_1)$ is the scaling obtained from Theorem~\ref{thm:onestep}. 
% Recall the definition of $\hat \lambda$ and $\hat \mu$ in Theorem \ref{thm:onestep}. Note that $\hat\pi$ is a minimiser of some $c_1\in \mathscr C$ between the measures $\lambda_1 = \hat \lambda$ and $\mu_1 = \hat \mu$ satisfying \eqref{eq:longTraj}. 

We will now show that
\begin{align}\label{eq:data}
&[\lambda_{\mathsf{t}_1}]_{\alpha,\theta R}+[\mu_{\mathsf{t}_1}]_{\alpha,\theta R}\\\leq & \, (1+C(E(\pi,R)^\frac 1 2 + R^\alpha([\mu_{\bar{\mathsf{s}}}]_{\alpha,R}+[\lambda_{\bar{\mathsf{s}}}]_{\alpha,R})+R^{-1}\e))([\lambda_{\bar{\mathsf{s}}}]_{\alpha,R}+[\mu_{\bar{\mathsf{s}}}]_{\alpha,R}) \, .
\end{align}
For $\mu_{\mathsf{t}_1}$, we have
\begin{align}
[\mu_{\mathsf{t}_1}]_{\alpha,\theta R}=& \gamma_1^{-d} \sup_{x,y\in B_{\theta R}} \frac{|\mu_{\bar{\mathsf{s}}}(\gamma_1^{-1}A_1^{-1}x+b_1)-\mu_{\bar{\mathsf{s}}}(\gamma_1^{-1}A_1^{-1} y+b_1)|}{|x-y|^\alpha}\\
\leq& \gamma_1^{-d} |\gamma_1 A_1^{-1}|^\alpha \sup_{x,y\in B_{\theta R}}\frac{|\mu(\gamma_1^{-1}A_1^{-1} x+b_1)-\mu(\gamma_1^{-1}A_1^{-1} y+b_1)|}{|(\gamma_1^{-1}A_1^{-1} x+b_1)-(\gamma_1^{-1}A_1^{-1} y+b_1)|^\alpha}\\
\leq& \gamma_1^{-(d+\alpha)}|A_1^{-1}|^\alpha \sup_{x',y'\in B_R}\frac{|\mu(x')-\mu(y')|}{|x'-y'|^\alpha}\\
=& \gamma_1^{-(d+\alpha)}|A_1^{-1}|^\alpha [\mu_{\bar{\mathsf{s}}}]_{\alpha,R}.
\end{align}
The argument for $\lambda_{\mathsf{t}_1}$ is similar.  Considering the estimates of $\gamma_1$ and $A_1$ we obtain from Theorem \ref{thm:onestep} this gives \eqref{eq:data}. Furthermore, we know that $\lambda_{\mathsf{t}_1}(0)=1=\mu_{\mathsf{t}_1}(0)$.

We would now like to reapply Theorem \ref{thm:onestep} for which we would need to justify that $\mathsf{t}_1$ is admissible. We shall do this later. For now, we set $r_k = \theta^k R$,  and assuming we can iterate Theorem \ref{thm:onestep}, we find a sequence of symmetric matrices $A_k$ with $\textup{det}\,A_k = 1$, a sequence of vectors $b_k$, real numbers $\gamma_k$ along with the associated scalings $\mathsf{s}_k=(A_k,b_k,\gamma_k,1)$, $\mathsf{t}_k=\mathsf{s}_k\diamond \mathsf{t}_{k-1}$. This allows us to define
\begin{align}
\lambda_k:=\lambda_{\mathsf{t}_k}, \, \mu_k:=\mu_{\mathsf{t}_k},\, \pi_k :=\pi_{\mathsf{t}_k}.
\end{align}
Noting that  $\lambda_k(0)=\mu_k(0)=1$ and, using Theorem~\ref{thm:onestep}, for $r_k \gg \e$, we have the estimate
\begin{align}
E_k :=E(\pi_k,r_k)\leq \theta^{2\alpha} E_{k-1}+C_\theta r_{k-1}^{2\alpha}([\lambda_{k-1}]_{\alpha,r_{k-1}}^2+[\mu_{k-1}]_{\alpha,r_{k-1}}^2)+C_\theta r_{k-1}^{-2}\e^2) \\
\lvert A_k-\textup{id}\rvert+\frac 1 {r_{k-1}^2}\lvert b_k\rvert^2 \lesssim E_{k-1}+r_{k-1}^{2\alpha}([\lambda_{k-1}]_{\alpha,r_{k-1}}^2+[\mu_{k-1}]_{\alpha,r_{k-1}}^2)+r_{k-1}^{-2}\e^2 \\
\lvert \gamma_k-1\rvert^2\lesssim E_{k-1}+r_{k-1}^{2\alpha}([\lambda_{k-1}]_{\alpha,r_{k-1}}^2+[\mu_{k-1}]_{\alpha,r_{k-1}}^2)+r_{k-1}^{-2}\e^2\label{eq:allEstimatesCampanato}
\end{align}

Note that as for \eqref{eq:data}, we have
\begin{align}\label{eq:data2}
&[\mu_k]_{\alpha,r_k}+[\lambda_k]_{\alpha,r_k}\\\leq&\, (1+C(E(\pi_{k-1},r_{k-1})^\frac 1 2 + r_{k-1}^\alpha([\mu_{k-1}]_{\alpha,r_{k-1}}+[\lambda_{k-1}]_{\alpha,r_{k-1}})+ r_{k-1}^{-1}\e))\\
&\times([\mu_{k-1}]_{\alpha,r_{k-1}}+[\lambda_{k-1}]_{\alpha,r_{k-1}})
\end{align}
We claim that
\begin{align}
[\mu_k]_{\alpha,r_k}+[\lambda_k]_{\alpha,r_k}\leq (1+\theta^{k\alpha}+C r_{k-1}^{-1}\e)([\mu_{k-1}]_{\alpha,r_{k-1}}+[\lambda_{k-1}]_{\alpha,r_{k-1}})\notag\\
E(\pi_k,r_k)\leq C(E(\pi,R)+R^{2\alpha}([\mu]_{\alpha,R}+[\lambda]_{\alpha,R}))+C r_k^{-2}\e^2.\label{eq:claim}
\end{align}
We prove \eqref{eq:claim} by induction. The case $k=1$ is clear, so suppose \eqref{eq:claim} holds for $k=1,\ldots,K-1$. By the induction hypothesis and \eqref{eq:data2},
\begin{align}
[\mu_{K-1}]_{\alpha,r_{K-1}}+[\lambda_{K-1}]_{\alpha,r_{k-1}}\leq \prod_{k=1}^{K-2} (1+\theta^{\alpha k}+C r_{k-1}^{-1}\e)([\mu]_{\alpha,R}+[\lambda]_{\alpha,R}).
\end{align}
Now note that for $r_K \gg \e$, i.e. $K \ll \frac{\log(\e/R)}{\log(\theta)}$,
\begin{align}
\prod_{k=1}^{K-2}(1+\theta^{\alpha k}+C (\theta^{k-1} R)^{-1} \e)\leq& \prod_{k=1}^{K-2}(1+\theta^{\alpha k}+C (\theta^{K-1} R)^{-1}\e)\\
\leq& \prod_{k=1}^{k_K}(1+2\theta^{\alpha k}) \prod_{k_K+1}^{K-2} (1+2C (\theta^{K-1} R)^{-1}\e).
\end{align}
Here $k_K = \max\{k\colon \theta^{\alpha k} \geq C(\theta^{K-1} R)^{-1}\e\}$. The first product is clearly finite with a bound independent of $k_K$. Regarding the second product, if $\theta^k R \geq C_0 \e$, we may bound it by
\begin{align}
(1+2C(\theta^{K-1} R)^{-1}\e)^K\leq (1+2C/C_0))^\frac{\log(C_0\e/R)}{\log \theta}
\end{align}
Elementary calculations now show that this product is bounded independent of $K$ as well, if $C_0$ is sufficiently large. Thus, we have shown that independently of $K$, as long as $\theta^K R \ll \e$,
\begin{align}
[\mu_{K-1}]_{\alpha,r_{K-1}}+[\lambda_{K-1}]_{\alpha,r_{K-1}}\leq C<\infty.
\end{align}
Then \eqref{eq:data2} and the induction hypothesis gives
\begin{align}
[\mu_K]_{\alpha,r_K}+[\lambda_K]_{\alpha,r_K}\leq (1+\theta^{K\alpha}+C r_{K-1}^{-1}\e)([\mu_{K-1}]_{\alpha,r_{K-1}}+[\lambda_{K-1}]_{\alpha,r_{K-1}}),
\end{align}
which is the first part of \eqref{eq:claim}. Note that a further consequence of our calculations is that for $r_k \gg \e$,
\begin{align}\label{eq:bounddata}
[\mu_{k}]_{\alpha,r_{k}}+[\lambda_{k}]_{\alpha,r_{k}}\lesssim [\mu]_{\alpha,R}+[\lambda]_{\alpha,R} \ll 1.
\end{align}
We turn to the second part of \eqref{eq:claim}. Note that using \eqref{eq:bounddata},
\begin{align}
\sup_{1\leq k \leq K}E(\pi_k,r_k)\leq& \theta^{2\alpha}(E(\pi,R)+\sup_{1\leq k\leq K-1} E(\pi_k,r_k)\\
&+C_\theta R^{2\alpha}([\lambda]_{\alpha,R}^2+[\mu]_{\alpha,R}^2)+C r_K^{-2}\e^2.
\end{align}
Since $\theta<1$, absorbing terms this gives the second part of \eqref{eq:claim}.

We need to now show that the iteration of Theorem \ref{thm:onestep} is justified. In order to this we need to prove that $\mathsf{t}_k =(B_k,d_k,\Gamma_k,\kappa_k) \in \mathscr{S}$ for all $k \leq K$, where
\begin{align}
B_k=&\, A_k A_{k-1}\dots A_1 , \quad  \Gamma_k = \left(\frac{\lambda(0)}{\mu(0)}\right)^{\frac1d}\Pi_{i=1}^k \gamma_i,\\
d_k =&\,\sum_{i=1}^{k}\left(\prod_{j=1}^{i-1}\gamma_{j} A_{j}\right)b_{i} ,\quad \kappa_k=\lambda(0)^{-1}\,.
\end{align}
Then,  combining \eqref{eq:allEstimatesCampanato}, \eqref{eq:claim} and \eqref{eq:bounddata}, as long as $r_k\gg \e$, we can insure that
\begin{align}
\lvert B_k-\textup{id}\rvert^2 \ll 1,\qquad \left\lvert \Gamma_k- \left(\frac{\lambda(0)}{\mu(0)}\right)^{\frac1d}\right \rvert \ll 1\,.
\end{align}
The second of the two above inequalities then ensures that $\mathsf{t}_k \in \mathscr{S}$ since $\Gamma_k \in G$ for all $k\leq K$ and thus our application of Theorem~\ref{thm:onestep} was justified. Furthermore, by a straightforward calculation, as in \cite{Goldman2020}, we can obtain
\begin{align}
\min_{A\in \mathbf{S}^d_+,b\in \R^d} \frac 1 {(\theta^k R)^{2+d}}\int_{\#_{\theta^k R}} \lvert y-Ax-b\rvert^2\dd \pi\lesssim& E(\pi,R)+([\lambda]_{\alpha,R}^2+[\mu]_{\alpha,R}^2)+r_k^{-2}\e^2\\
\leq& E(\pi,R)+D(R)+r_k^{-2}\e^2\\
\end{align}
Filling in the gaps between $r_k$ and $r_{k+1}$ in a routine fashion this completes the proof.
\end{proof}

 \section{Application to entropic optimal transport and the proof of Theorem~\ref{thm:entropicmain}} % (fold)
 \label{sec:application_to_entropic_optimal_transport}
 In this section, we will show that Assumptions (i) and (ii) which we have made for our general theory are valid for the entropic optimal transport problem ~\eqref{eq:eot} as a result of which the proof of Theorem~\ref{thm:entropicmain} follows. For a fixed $\lambda,\mu \in \mathcal{M}(\R^d)$ with $\lambda(\R^d)=\mu(\R^d)$, we note that the entropic cost $OT_\e(\lambda,\mu)$ (see~\eqref{eq:cost}) can be expressed as $\mathsf{c}_\e:\Pi(\lambda,\mu)\to \R$, where
 \begin{equation}
 OT_\e(\lambda,\mu) = \min_{\pi \in \Pi(\lambda,\mu)} \mathsf{c}_\e(\pi) \, ,
 \end{equation}
 where $\mathsf{c}_\e:\Pi(\lambda,\mu)\to \R$ is defined as follows
 \begin{equation}
\mathsf{c}_\e (\pi):= \int \lvert x-y\rvert^2 \dd \pi+ \e^2 \int \log\left(\frac{\dd {\pi}}{\dd(\lambda\otimes\mu)}\right) \dd \pi \, .
 \end{equation}
 To check the assumptions, we start by considering an admissible scaling $\mathsf{s}=(A,b,\gamma,\kappa)\in \mathscr{S}$ and $\pi_\e$ which minimises $\mathsf{c}_\e$. Then for $\lambda_{\mathsf{s}},\mu_{\mathsf{s}}$ and $\pi_{\e,\mathsf{s}} \in \Pi(\lambda_{\mathsf{s}},\mu_{\mathsf{s}})$ as defined in~\eqref{eq:rescalings}, we find
 \begin{align}
& \frac 1 {\kappa^2}\int |x-y|^2 \dd \pi_{\e,\mathsf{s}}+\frac{\e^2}{\kappa^2} \int \log\left(\frac{\dd  \pi_{\e,\mathsf{s}}}{\dd(\lambda_{\mathsf{s}} \otimes \mu_{\mathsf{s}})}\right) \dd  \pi_{\e,\mathsf{s}}\\
  =& \int |A^{-1}x-\gamma A(y-b)|^2 \dd \pi_\e+\e^2 \int \log \left(\frac{\dd \pi_\e}{\dd(\lambda\otimes\mu)}\right)\dd \pi_\e\\
 =& \int |A^{-1}x|^2-\gamma^2|x|^2+2\gamma\langle x,b\rangle \,\dd \lambda+\int |\gamma A(y-b)|^2-\gamma^2 |y|^2 \,\dd \mu+\int \gamma |x-y|^2 \dd \pi_\e\\
 &+\e^2 \int \log \left(\frac{\dd \pi_\e}{\dd(\lambda\otimes\mu)}\right)\dd \pi_\e.
 \end{align}
  Recognising that the first two integrals are null-Lagrangians, we have that $\pi_{\e,\mathsf{s}}$ is a minimiser of $\mathsf{c}_{\e \gamma^{-\frac12}}:\Pi(\lambda_{\mathsf{s}},\mu_{\mathsf{s}}) \to \R$. Given this information, we are in a position to prove Assumptions (i) and (ii).
% so that $\mathscr C$ can be defined as follows
% \begin{equation}
% \mathscr{C}:= \left\{ \right\}
% \end{equation}

 We first show that the energy of very long trajectories is small, i.e. establishing \eqref{eq:longTraj}, i.e. Assumption (ii). To this end, we prove the following proposition.

\begin{proposition}\label{prop:verylong}
Let $\pi_\e$ be the minimiser of entropic optimal transport at scale $\e>0$ and take $R>0$. Suppose $E(\pi_\e,5R)+D(5R)\ll 1$ and assume $\lambda,\mu$ are bounded away from $0$ in $B_R$. Then 
\begin{align}
\frac 1 {R^{d+2}}\int_{\#_{4R}\cap \{\lvert x-y\rvert \geq 7R\}} \lvert x-y\rvert^2\dd \pi_\e \lesssim e^{-\frac 1 {\e^2/R^2}}E(\pi_\e,5R) \, .
\end{align}
Similarly, for any admissible scaling $\mathsf{s}\in \mathscr{S}$ such that $E(\pi_{\e,\mathsf{s}},5R)+D_{\lambda_\mathsf{s},\mu_{\mathsf{s}}}(5R)\ll 1$, we have
\begin{equation}
\frac 1 {R^{d+2}}\int_{\#_{4R}\cap \{\lvert x-y\rvert \geq 7R\}} \lvert x-y\rvert^2\dd \pi_{\e,\mathsf{s}} \lesssim e^{-\frac 1 {\e^2/ R^2}}E(\pi_{\e,\mathsf{s}},5R) \, ,
\end{equation}
where the implicit constant is independent of the choice of scaling $\mathsf{s}\in \mathscr{S}$. Moreover, 
\begin{align}
\frac{1}{R^d}\pi_{\e,\mathsf{s}}(\#_{4R}\cap \{\lvert x-y\rvert \geq 7R\})\lesssim e^{-\frac 1 {\e^2/ R^2}}E(\pi_{\e,\mathsf{s}},5R) \, ,
\end{align}
where again the implicit constant is independent of the choice of scaling $\mathsf{s}\in \mathscr{S}$.
\end{proposition}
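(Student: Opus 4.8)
The plan is to exploit the key structural fact established just above the statement: for any admissible scaling $\mathsf{s}$, the rescaled plan $\pi_{\e,\mathsf{s}}$ is itself a minimiser of the entropic cost $\mathsf{c}_{\e\gamma^{-1/2}}$ for the marginals $\lambda_{\mathsf{s}},\mu_{\mathsf{s}}$. Since $\gamma$ ranges over a compact subset $G$ of $(0,\infty)$, the effective regularisation parameter $\e\gamma^{-1/2}$ is comparable to $\e$ uniformly in $\mathsf{s}$, so it suffices to prove the first (unscaled) estimate with constants depending only on $G$ and $d$, and the scaled versions follow verbatim. Accordingly I would fix $\pi_\e$ a minimiser, fix $R>0$, and by the usual scaling reduce to $R=1$ (tracking that the regularisation parameter becomes $\e/R$, which is why the exponent $e^{-1/(\e^2/R^2)}$ appears).

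The heart of the argument is a competitor estimate that uses the first-order (Euler--Lagrange) optimality of the entropic minimiser in its well-known form: $\pi_\e$ has a density of the form $\dd\pi_\e = e^{(\varphi(x)+\psi(y)-|x-y|^2)/\e^2}\,\dd(\lambda\otimes\mu)$ for Schr\"odinger potentials $\varphi,\psi$. The plan is to compare the mass that $\pi_\e$ places on very long trajectories (those with $|x-y|\ge 7$ and $(x,y)\in\#_4$) against the mass it places on short trajectories near the diagonal. Concretely, using that $\lambda,\mu$ are bounded above and below on $B_1$ (and hence, after the reductions, on the relevant balls), one controls the potentials on $B_1$ in terms of the energy $E(\pi_\e,5)$ via a mean-value/Harnack-type argument on the Schr\"odinger system; then for a pair $(x,y)$ with $|x-y|\ge 7$ the factor $e^{-|x-y|^2/\e^2}$ in the density beats the potential contributions, producing the Gaussian-type gain $e^{-c/\e^2}$. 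Combining the pointwise density bound on $\{|x-y|\ge 7\}\cap\#_4$ with the trivial volume bound $\lambda(B_4)+\mu(B_4)\lesssim 1$ gives the mass estimate; multiplying by $|x-y|^2$ and summing the resulting rapidly-converging tail (the $|x-y|^2 e^{-|x-y|^2/\e^2}$ is absorbed into $e^{-c/\e^2}$ at the cost of a constant) gives the energy estimate. The factor $E(\pi_\e,5R)$ on the right-hand side is recovered by noting that the smallness hypothesis $E(\pi_\e,5)+D(5)\ll 1$ forces the potentials, hence the whole long-trajectory contribution, to scale linearly with $E$ — or, more robustly, by observing that if $E(\pi_\e,5)=0$ the long-trajectory mass vanishes and arguing by the quantitative version of this.

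The main obstacle I anticipate is making the bound on the Schr\"odinger potentials $\varphi,\psi$ on $B_1$ (equivalently, on the ratio of the density to $\dd(\lambda\otimes\mu)$ there) genuinely \emph{uniform} in the scaling $\mathsf{s}$ and correctly \emph{linear} in $E(\pi_\e,5R)$ rather than merely $O(1)$: the naive Harnack estimate only gives boundedness of the potentials, not the factor $E$. The way around this is to not bound the long-trajectory integral by a constant, but to re-run the competitor argument quantitatively — rerouting the long-trajectory mass through the near-diagonal region and paying $\e^2\times(\text{entropy change})$ against the quadratic gain $\sim |x-y|^2$, which is exactly the kind of estimate already packaged in Assumption (i)/\eqref{eq:quasimin} and Lemma \ref{lem:soft}; the point is that the entropic regularisation only costs $O(\e^2)$ per unit mass moved, while each long trajectory carries quadratic cost $\gtrsim 49$, so such trajectories can carry at most an $e^{-c/\e^2}$-fraction of the (normalised) energy. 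The bookkeeping to convert "fraction of the energy on $\#_{5R}$" into the stated $E(\pi_{\e,\mathsf{s}},5R)$ with the claimed exponential prefactor, uniformly over $\mathsf{s}\in\mathscr{S}$, is where the care is needed.
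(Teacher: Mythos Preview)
Your proposal has the right intuition---the Schr\"odinger form of the density is indeed the engine---but there are two genuine gaps.

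First, your pointwise approach requires bounding $\varphi(x)+\psi(y)$ for $(x,y)\in\#_4$ with $|x-y|\ge 7$. But then $y$ may lie far outside $B_5$, and you have no control on $\psi$ there; the Harnack-type argument only works where $\mu$ is bounded below. The paper avoids this entirely by using the \emph{approximate cyclical monotonicity} identity
\[
\frac{\dd\pi_\e(x,y)}{\dd(\lambda\otimes\mu)}\,\frac{\dd\pi_\e(x',y')}{\dd(\lambda\otimes\mu)}
= e^{-\frac{1}{\e^2}\big(|x-y|^2+|x'-y'|^2-|x'-y|^2-|x-y'|^2\big)}
\frac{\dd\pi_\e(x,y')}{\dd(\lambda\otimes\mu)}\,\frac{\dd\pi_\e(x',y)}{\dd(\lambda\otimes\mu)},
\]
in which the potentials cancel exactly. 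One then integrates $|x-y|^2$ over pairs $\big((x,y),(x',y')\big)$ with $(x,y)$ a long trajectory and $(x',y')$ a short one chosen so that the exponent is $\ge 1$; on the swapped side the triangle inequality $|x-y|^2\lesssim |x-y'|^2+|x'-y'|^2+|x'-y|^2$ produces the factor $E(\pi_\e,5)$ directly, since each term is integrated against a copy of $\pi_\e$ on $\#_5$. The remaining work is a geometric cone argument showing that, for each long $(x,y)$, the $\pi_\e$-mass of admissible swap partners $(x',y')$ is bounded below by a fixed constant (this uses $D(5)\ll 1$ and a Markov bound to exclude partners with $|x'-y'|^2>\Lambda E$). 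You do not articulate this pairing-and-swap mechanism, which is precisely what delivers both the exponential gain and the linear factor $E$ simultaneously.

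Second, your fallback---invoking Assumption~(i)/\eqref{eq:quasimin} and Lemma~\ref{lem:soft} to run a competitor argument---is circular in this section: Proposition~\ref{prop:verylong} is being used to \emph{verify} Assumption~(ii) for entropic transport, and it is also an input to Proposition~\ref{prop:quasiMin1}, which verifies Assumption~(i). You cannot appeal to those here. Moreover, the entropic competitor argument you sketch (``$\e^2\times$ entropy change'') is delicate because concentrating mass can make the entropy change large; the paper does carry out such an argument, but only \emph{after} Proposition~\ref{prop:verylong} is available, and with a carefully chosen Gaussian competitor.
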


\begin{proof}
By scaling we may assume $R=1$. We prove the result only for the trivial scaling $\mathsf{s}=(\mathrm{Id},0,1,1)$ and note that for general $\mathsf{s}\in \mathscr{S}$ it follows from the fact that $\pi_{\e,\mathsf{s}}$ minimises $\mathsf{c}_{\e \gamma^{\frac12}}:\Pi(\lambda_{\mathsf{s}},\mu_{\mathsf{s}}) \to \R$ and the fact that $\kappa,\gamma $ lie in compact sets separated from $0$ and $\infty$. Let $\Lambda>0$ to be determined at a later stage.
We start by defining the following set:
\begin{align}
A(x,y) :=& \Big\{(x^\prime,y^\prime)\in \#_4\colon \lvert x-y\rvert^2+\lvert x^\prime-y^\prime\rvert^2-\lvert x^\prime-y\rvert^2-\lvert x-y^\prime\rvert^2 \geq 1,\\
&\quad \lvert x^\prime-y^\prime\rvert^2\leq \Lambda E(\pi,4)\Big \}.
\end{align}
Using the approximate cyclical montonicity of $\pi_\e$ (see \cite[Proposition 2.2]{Bernton2022}), we have
\begin{align}
&\frac{\dd \pi_\e(x,y)}{\dd(\lambda\otimes\mu)(x,y)}\frac{\dd \pi_\e(x^\prime,y^\prime)}{\dd(\lambda\otimes\mu)(x^\prime,y^\prime)}\\=&\,  e^{-\frac 1 {\e^2} \left(\lvert x-y\rvert^2+\lvert x^\prime-y^\prime\rvert^2-\lvert x^\prime-y\rvert^2-\lvert x-y^\prime\rvert^2\right)}
\times \frac{\dd \pi_\e(x,y^\prime)}{\dd(\lambda\otimes\mu)(x,y^\prime)}\frac{\dd \pi_\e(x^\prime,y)}{\dd(\lambda\otimes\mu)(x^\prime,y)}.
\label{eq:approxcm}
\end{align}
In particular, using the definition of $A(x,y)$ and \eqref{eq:approxcm}, we obtain
\begin{equation}\label{eq:pro4step1}
\begin{split}
&\int_{\#_4 \cap \{\lvert x-y\rvert\geq 7\}}\int_{A(x,y)}\lvert x-y\rvert^2\dd \pi_\e(x^\prime,y^\prime)\dd \pi_\e(x,y)\\
\leq& e^{-\frac 1 {\e^2}}\int \mathds 1_{\#_4 \cap \{\lvert x-y\rvert\geq 7\}\times A(x,y)}\lvert x-y\rvert^2\dd \pi_\e(x^\prime,y)\dd \pi_\e(x,y^\prime)\\
\lesssim& e^{-\frac 1 {\e^2}} \int \mathds 1_{\#_4 \cap \{\lvert x-y\rvert\geq 7\}\times A(x,y)} \lvert x-y^\prime\rvert^2+\lvert x^\prime-y^\prime\rvert^2+\lvert x^\prime-y\rvert^2 \dd \pi_\e(x,y^\prime)\dd \pi_\e(x^\prime,y)\\
\lesssim& e^{-\frac 1 {\e^2}}\pi_\e(\#_5)(2+\Lambda) E(\pi_\e,5)\lesssim e^{-\frac 1 {\e^2}}E(\pi_\e,5) \, .
\end{split}
\end{equation}

Given $(x,y)\in \#_4\cap \{\lvert x-y\rvert\geq 7\}$, assume without loss of generality $x\in B_4$ and consider the cone $C_\alpha(x,y)$ with vertex $x$ and aperture $\alpha\in(0,\pi)$ in direction $y-x$. Then for $(x^\prime,y^\prime)$ such that $x^\prime\in C_\alpha(x,y)\cap (B_2(x)\setminus B_{1}(x))\subset B_7$ with $\lvert x^\prime-y^\prime\rvert\leq \Lambda E(\pi_\e,5)$,
\begin{gather}
\lvert x^\prime-x\rvert\leq 2\\
\lvert x-y^\prime\rvert \leq \lvert x-x^\prime\rvert + \lvert x^\prime-y^\prime\rvert \leq 2+\lvert x^\prime-y^\prime\rvert\\
\lvert x^\prime-y\rvert \leq 2\textup{sin}(\alpha)+\lvert x-y\rvert-\textup{cos}(\alpha).
\end{gather}
We choose $\alpha$ sufficiently small to ensure
\begin{align}
\lvert x^\prime-y\rvert \leq \lvert x-y\rvert - \frac 3 4.
\end{align}
In particular, applying these bounds gives
\[
\begin{split}
 &\lvert x-y\rvert^2+\lvert x^\prime-y^\prime\rvert^2-\lvert x-y^\prime\rvert^2-\lvert x^\prime-y\rvert^2\\
\geq& |x-y|^2 + |x^\prime-y^\prime|^2 - (2+|x^\prime-y^\prime|)^2 - (|x-y|^2 -\frac34)^2\\
\geq& |x-y|^2+|x^\prime-y^\prime|^2 - 4 - 4|x^\prime - y^\prime| - |x^\prime-y^\prime|^2 - |x-y|^2 + \frac32 |x-y| - \frac9{16}  \\
=& \frac32|x-y| - 4  -4|x'-y'|- \frac9{16} \ge 1,
\end{split}
\]
as long as $\Lambda E(\pi_\e,5)\leq 1$.

Consequently,
\begin{align}
A(x,y)\supset \left\{(x^\prime,y^\prime)\colon x^\prime \in C_\alpha(x,y)\cap(B_2(x)\setminus B_{1}(x))\,\text{and}\,\lvert x^\prime-y^\prime\rvert^2 \leq \Lambda E(\pi_\e,5)\right\}.
\end{align}
For any cone $C_\alpha$ with aperture $\alpha$, centered at a point in $B_5$, as $D(5)\ll 1$,
\begin{align}
\pi_\e(C_\alpha\cap (B_2(x)\setminus B_{1}(x))\times \R^d)\gtrsim \alpha.
\end{align}
Moreover,
\begin{align}
& \pi_\e\left((\left(C_\alpha \cap (B_2(x)\setminus B_{1}(x))\times \R^d\right)\cap \{(x^\prime,y^\prime)\colon\lvert x^\prime-y^\prime\rvert^2\geq \Lambda E(\pi_\e,5)\}\right)\\
\leq& (\Lambda E(\pi_\e,5))^{-1}\int_{C_\alpha \cap (B_2(x)\setminus B_{1}(x))\times \R^d} \lvert x^\prime-y^\prime\rvert^2\dd \pi_\e \leq \Lambda^{-1}.
\end{align}
Thus, we deduce for some $c,c_1>0$,
\begin{align}
\pi_\e\left(\left(C_\alpha \cap (B_2(x)\setminus B_{1}(x))\times \R^d\right)\cap \{(x^\prime,y^\prime)\colon\lvert x^\prime-y^\prime\rvert \leq 1\}\right)\geq c\alpha-c_1\Lambda^{-1}\geq \frac{c\alpha} 2,
\end{align}
where to obtain the last inequality we chose $\Lambda>\frac{2c_1}{c \alpha}$. In particular, this shows
\begin{align}
&\int_{\#_4\cap \{\lvert x-y\rvert \geq 7\}}\int_{A(x,y)}\lvert x-y\rvert^2 \dd \pi_\e(x^\prime,y^\prime)\dd \pi_\e(x,y)\\
\geq& \int_{\#_4\cap \{\lvert x-y\rvert \geq 7\}}\lvert x-y\rvert^2\pi_\e\Big(\left(C_\alpha(x,y)\cap (B_2(x)\setminus B_{1}(x))\times \R^d\right) \\
&\quad\cap \{(x^\prime,y^\prime)\colon\lvert x^\prime-y^\prime\rvert \leq 1\}\Big) \dd \pi_\e(x,y)\\
\geq& \frac{c\alpha} 2 \int_{\#_4 \cap \{\lvert x-y\rvert \geq 7\}}\lvert x-y\rvert^2\dd \pi_\e.
\end{align}
Combining the latter with \eqref{eq:pro4step1} yields
\begin{align}
\int_{\#_4 \cap \{\lvert x-y\rvert \geq 7\}}\lvert x-y\rvert^2\dd \pi_\e\lesssim e^{-\frac 1 {\e^2}} E(\pi_\e,5).
\end{align}
For the moreover part, we proceed as follows
\begin{align}
& \, \pi_\e(\{\#_4\cap \{|x-y|\geq 7\})  \\
\lesssim & \, \int_{\#_4 \cap \{\lvert x-y\rvert \geq 7\}}\lvert x-y\rvert^2\, \dd \pi_\e \lesssim e^{-\frac {1}{\e^2}} \int_{\#_5} |x-y|^2 \, \mathrm{d}\pi_\e \, .
\end{align}
This completes the proof.
% \begin{align}
% A(x,y)\subset B_5\times B_5.
% \end{align}
% Hence, using \eqref{eq:approxcm}, we obtain
% \begin{align}
% &\int_{\#_4 \cap \{|x-y|\geq 7\}} \int_{A(x,y)}\dd \pi_\e(x',y'),\dd \pi_\e(x,y)\\
% \leq& e^{-\frac 1 {\e^2}} \int \mathds 1_{\#_4 \cap \{\lvert x-y\rvert\geq 7\}\times A(x,y)} \dd\pi_\e(x',y)\dd \pi_\e(x,y')\\
% \leq& e^{-\frac 1 {\e^2}}\pi_\e(\#_5)^2\lesssim e^{-\frac 1 {\e^2}}.
% \end{align}
% Arguing similarly to before, we bound
% \begin{align}
% &\int_{\#_4 \cap \{|x-y|\geq 7\}} \int_{A(x,y)}\dd \pi_\e(x',y'),\dd \pi_\e(x,y)\\
% \geq& \int_{\#_4\cap \{|x-y|\geq 7\}}\pi_\e\left((C_\alpha(x,y)\cap(B_2(x)\setminus B_1(x))\times \R^d)\cap \{(x^\prime,y^\prime)\colon |x^\prime-y^\prime|\leq 1\}\right)\dd\pi_\e(x,y)\\
% \geq& \frac{c\alpha} 2 \pi_\e(\#_4\cap \{|x-y|\geq 7\}).
% \end{align}
%
\end{proof}
\subsection{Local quasiminimality of entropic optimal transport}
In this subsection, we will show the quasiminimality required by Assumption (i). Before we present the main result of this section, we introduce the following set:
\begin{equation}
P_R := (B_R \times B_{\Lambda R}) \cup (B_{\Lambda R} \times B_{ R}) \, ,
\end{equation}
for some $\Lambda >1$, noting that $P_R \subset \#_R$. The exact result we will prove takes the following form:
\begin{proposition}\label{prop:quasiMin1}
Suppose $\pi_\varepsilon$ is the minimiser of entropic optimal transport at scale $\e>0$ and fix $R\gg \e$. Assume $\lambda,\mu$ admit $C^{0,\alpha}$-densities and are bounded away from $0$ on $B_R$. Further assume $E(\pi,R)+D_{\lambda,\mu}(R)\ll 1$. Let $(\bar \lambda,\bar \mu)$ be the normalised marginals of $\restr{\pi_\e}{P_R}$. Then, choosing $\Lambda=11/4$, for any $\delta\in(0,1)$ there exists a $C_1=C_1(\delta,R,\lambda(0),\mu(0))<\infty$, such that
\begin{align}\label{eq:quasiminEntro}
\int_{\#_R}\lvert x-y\rvert^2 \dd \pi_\e\leq & \, \pi_\e (P_R)OT(\bar\lambda,\bar\mu)+C_1 \pi_\e(\#_{\Lambda R}) \e^2 \\
& \, + \delta \int_{\#_{2R}}|x-y|^2\, \mathrm{d}\pi_{\e}\, .
\end{align}
Moreover, it is possible to ensure that $C_1$ is increasing as a function of $R$.
Similarly, for any admissible scaling $\mathsf{s}\in \mathscr{S}$, we have
\begin{align}\label{eq:quasiminEntros}
\int_{\#_R}\lvert x-y\rvert^2 \dd \pi_{\e,\mathsf{s}}\leq & \, \pi_{\e,\mathsf{s}} (P_R)OT(\bar\lambda_{\mathsf{s}},\bar\mu_{\mathsf{s}})+C_1 \pi_{\e,\mathsf{s}}(\#_{\Lambda R}) \e^2 \\
& \, + \delta \int_{\#_{2R}}|x-y|^2\, \mathrm{d}\pi_{\e,\mathsf{s}}\, ,
\end{align}
where $(\bar \lambda_{\mathsf{s}},\bar \mu_{\mathsf{s}})$ are the normalised marginals of $\restr{\pi_{\e,\mathsf{s}}}{P_R}$.
% Moreover, there is $C_2>0$ such that
% \begin{align}
% \int_{\#_R}\lvert x-y\rvert^2 \dd \pi\leq OT(\restr{\lambda}{B_R}+f_R,\restr{\mu}{B_R}+g_R)+ C_2 \left(\e^2 \log(\e^{-2})+\int_{(x,y)\colon \{\exists t\colon X(t)\in \partial B_R\}} \lvert x-y\rvert^2 \dd \pi\right).
% \end{align}
\end{proposition}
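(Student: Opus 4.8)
The plan is to test the global minimality of $\pi_\e$ for $\mathsf{c}_\e$ against a competitor that alters $\pi_\e$ only on $\restr{\pi_\e}{P_R}$, and to handle the entropic terms by pairing an upper bound for the competitor's entropy with a matching lower bound for the \emph{local} entropy of $\pi_\e$ on $P_R$, so that the divergent $\tfrac d2\log\e^{-2}$ contributions cancel. First I would reduce to the same statement with $\#_R$ replaced by $P_R$: any $(x,y)\in\#_R\setminus P_R$ has, say, $x\in B_R$ and $y\notin B_{\Lambda R}$, hence $\lvert x-y\rvert\ge(\Lambda-1)R=\tfrac74R$, and since $\tfrac74$ is exactly the ratio of threshold to scale in Proposition~\ref{prop:verylong}, applying that proposition with its radius equal to $R/4$ gives $\int_{\#_R\setminus P_R}\lvert x-y\rvert^2\,\dd\pi_\e\lesssim R^{d+2}e^{-cR^2/\e^2}E(\pi_\e,\tfrac54R)$, which for $R\gg\e$ is dominated by either error term in~\eqref{eq:quasiminEntro}. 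So it remains to bound $\int_{P_R}\lvert x-y\rvert^2\,\dd\pi_\e$.

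Write $\lambda_1,\mu_1\in\mathcal M(\R^d)$ for the (unnormalised) marginals of $\restr{\pi_\e}{P_R}$, so that $\lambda_1\le\lambda$, $\mu_1\le\mu$, both are supported in $B_{\Lambda R}$, and $\pi_\e(P_R)\,OT(\bar\lambda,\bar\mu)=OT(\lambda_1,\mu_1)$. Mimicking the construction behind the upper bound in the expansion~\eqref{eq:eot} --- equivalently, pulling back the explicit entropic minimiser on a torus containing $B_{\Lambda R}$ --- I would choose $\tilde\pi\in\Pi(\lambda_1,\mu_1)$ with
\[
\int\lvert x-y\rvert^2\,\dd\tilde\pi\le OT(\lambda_1,\mu_1)+C_1\,\pi_\e(\#_{\Lambda R})\,\e^2,\qquad\mathrm{Ent}(\tilde\pi\,\vert\,\lambda\otimes\mu)\le\tfrac d2\,\pi_\e(P_R)\log\e^{-2}+C_1\,\pi_\e(\#_{\Lambda R}).
\]
Then $\hat\pi:=\tilde\pi+\restr{\pi_\e}{P_R^c}\in\Pi(\lambda,\mu)$; using that $\mathrm{Ent}(\,\cdot\,\vert\,\lambda\otimes\mu)$ splits additively over the restrictions to $P_R$ and $P_R^c$, the inequality $\mathsf{c}_\e(\pi_\e)\le\mathsf{c}_\e(\hat\pi)$ becomes
\[
\int_{P_R}\lvert x-y\rvert^2\,\dd\pi_\e+\e^2\,\mathrm{Ent}(\restr{\pi_\e}{P_R}\,\vert\,\lambda\otimes\mu)\le\int\lvert x-y\rvert^2\,\dd\tilde\pi+\e^2\,\mathrm{Ent}(\tilde\pi\,\vert\,\lambda\otimes\mu)+\e^2\,\mathcal C,
\]
where $\mathcal C=\int_{P_R^c}\big[(g+f)\log(g+f)-g\log g-f\log f\big]\,\dd(\lambda\otimes\mu)$ is the overlap term, with $g,f$ the densities of $\tilde\pi$ and $\pi_\e$; since $\supp\tilde\pi\cap P_R^c\subset B_{\Lambda R}\times B_{\Lambda R}\subset\#_{\Lambda R}$ and $0\le(a+b)\log(a+b)-a\log a-b\log b\le a+b$, convexity of $t\log t$ gives $\mathcal C\le\tilde\pi(\#_{\Lambda R})+\pi_\e(\#_{\Lambda R})\le2\pi_\e(\#_{\Lambda R})$.

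The decisive step is the lower bound for $\mathrm{Ent}(\restr{\pi_\e}{P_R}\,\vert\,\lambda\otimes\mu)$. For any $\delta'\in(0,1)$ --- and using that $\restr{\pi_\e}{P_R}$ is supported in $B_{\Lambda R}\times B_{\Lambda R}$ --- I would write
\[
\mathrm{Ent}(\restr{\pi_\e}{P_R}\,\vert\,\lambda\otimes\mu)=\mathrm{Ent}\big(\restr{\pi_\e}{P_R}\,\vert\,e^{-\delta'\lvert x-y\rvert^2/\e^2}\restr{(\lambda\otimes\mu)}{B_{\Lambda R}\times B_{\Lambda R}}\big)-\frac{\delta'}{\e^2}\int_{P_R}\lvert x-y\rvert^2\,\dd\pi_\e,
\]
and bound the first, genuinely relative, entropy from below by Jensen: it is $\ge\pi_\e(P_R)\log\big(\pi_\e(P_R)/Z_{\delta'}\big)$, where $Z_{\delta'}=\int_{B_{\Lambda R}\times B_{\Lambda R}}e^{-\delta'\lvert x-y\rvert^2/\e^2}\,\dd(\lambda\otimes\mu)\le C\,\lVert\mu\rVert_{L^\infty(B_{\Lambda R})}\,\lambda(B_{\Lambda R})\,(\e^2/\delta')^{d/2}$. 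Since $\pi_\e(P_R)\simeq\lambda(B_{\Lambda R})$ up to the density bounds, this yields $\mathrm{Ent}(\restr{\pi_\e}{P_R}\,\vert\,\lambda\otimes\mu)\ge\tfrac d2\pi_\e(P_R)\log\e^{-2}-C_1\pi_\e(\#_{\Lambda R})-\tfrac{\delta'}{\e^2}\int_{P_R}\lvert x-y\rvert^2\,\dd\pi_\e$, with $C_1$ depending on $\delta'$ through $\log(1/\delta')$ and increasing in $R$ and in the local sup-norms of $\lambda,\mu$. Feeding this into the minimality inequality, the $\log\e^{-2}$ terms cancel; choosing $\delta'$ a small fixed multiple of $\delta$ and absorbing $\delta'\int_{P_R}\lvert x-y\rvert^2\,\dd\pi_\e\le\delta'\int_{\#_{2R}}\lvert x-y\rvert^2\,\dd\pi_\e$ yields the bound on $P_R$, hence~\eqref{eq:quasiminEntro}. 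For a general admissible scaling $\mathsf s=(A,b,\gamma,\kappa)$ the argument is identical once one recalls, from the computation preceding this proposition, that $\pi_{\e,\mathsf s}$ minimises $\mathsf c_{\e\gamma^{-1/2}}$ and that $\gamma,\kappa$ vary in fixed compacts of $(0,\infty)$, so every constant is uniform in $\mathsf s$; this gives~\eqref{eq:quasiminEntros}.

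The main obstacle is producing the two bounds on $\tilde\pi$ with the sharp entropy constant $\tfrac d2$ and an $O(\pi_\e(\#_{\Lambda R})\e^2)$ transport error: this is precisely a localised instance of the upper half of~\eqref{eq:eot}, and is where the explicit torus entropic minimiser enters. A subsidiary technical point is that $\lambda_1,\mu_1$ are only restrictions of $\lambda,\mu$, so their densities may degenerate near the cut $\partial B_R$; but the leakage of $\pi_\e$ out of $P_R$ is exponentially small, so $\lambda_1$ coincides with $\lambda$ on $B_R$ up to a multiplicative factor $1+O(e^{-cR^2/\e^2})$ (and likewise $\mu_1$), while the remaining ``annular'' mass of $\lambda_1,\mu_1$ is $O(\e R^{d-1})$ and effectively concentrated in an $O(\e\sqrt{\log\e^{-1}})$-shell about $\partial B_R$, hence transportable at cost $o(\e^2\pi_\e(\#_{\Lambda R}))$ without affecting either bound. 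By contrast I expect no difficulty from the local entropy lower bound: the Jensen/Gaussian-reference argument above is completely robust and, crucially, requires no asymptotics of the Schr\"odinger potentials.
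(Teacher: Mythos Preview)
Your proposal is correct and follows essentially the same architecture as the paper: test global minimality against a competitor obtained by replacing $\restr{\pi_\e}{P_R}$ with a plan in $\Pi(\lambda_1,\mu_1)$, bound the overlap term by convexity of $t\log t$, pair an entropy upper bound for the competitor with a Gaussian-reference Jensen lower bound for $\mathrm{Ent}(\restr{\pi_\e}{P_R}\,\vert\,\lambda\otimes\mu)$ so that the $\tfrac d2\log\e^{-2}$ contributions cancel, and absorb $\int_{\#_R\setminus P_R}$ via Proposition~\ref{prop:verylong}. Your entropy lower bound is exactly the paper's: the paper writes the tangent inequality $\int f\log f\ge\int(1+\log g)f-\int g$ and chooses $g\propto e^{-\delta|x-y|^2/((1+\delta)\e^2)}$ on $B_R\times B_R$, which unwinds to precisely your Jensen estimate with the tilted reference measure.

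The one genuine difference is in the competitor. You propose to \emph{construct} $\tilde\pi$ explicitly (via a torus model) so as to satisfy the two displayed bounds separately, and you correctly flag this construction --- together with the boundary degeneracy of $\lambda_1,\mu_1$ --- as the main obstacle. The paper bypasses both issues at once: it takes $\bar\pi=\pi_\e(P_R)\cdot\mathrm{argmin}\,OT_\e(\bar\lambda,\bar\mu)$, so that the combined quantity $\int|x-y|^2\,\dd\bar\pi+\e^2\,\mathrm{Ent}(\bar\pi\,\vert\,\bar\lambda\otimes\bar\mu)$ equals $\pi_\e(P_R)\,OT_\e(\bar\lambda,\bar\mu)$, and then invokes the (easy, upper-bound) half of the expansion $OT_\e\le OT+\tfrac d2\e^2\log\e^{-2}+C_T\e^2$ as a black box. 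This requires no regularity of $\bar\lambda,\bar\mu$ beyond what is already available, and in particular no analysis of the annular mass near $\partial B_R$. So your route would work, but the paper's choice of competitor is the cleaner way to close the argument.
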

The above result can be translated into the form of Assumption (i) by making the following observation: given any $\hat{\pi}=\tilde{\pi} + \restr{\pi_\e}{\#_R^c}\in \Pi(\lambda,\mu)$, then we know that $\tilde{\pi}\in \pi_\e (\#_R) \Pi(\tilde{\lambda},\tilde{\mu})$ where $\tilde{\lambda},\tilde{\mu}$ are the normalised marginals of $\left.\pi_\e\right|_{\#_R}$.  We then estimate using the triangle inequality,
\begin{align}\label{eq:postProcess}
OT(\bar \lambda,\bar \mu)=& \frac{\bar \lambda(R^d)^2}{\tilde \lambda(\R^d)^2} OT\left(\frac{\tilde \lambda(\R^d)}{\bar \lambda(\R^d)}\bar \lambda ,\frac{\tilde \mu(\R^d)}{\bar \mu(\R^d)}\bar \mu \right)\\
\leq&\frac{\bar \lambda(\R^d)^2}{\tilde \lambda(\R^d)^2}\left( OT\left(\frac{\tilde\lambda(\R^d)}{\bar \lambda(\R^d)}\bar \lambda,\tilde\lambda\right)+OT(\tilde\lambda,\tilde\mu)+OT\left(\tilde \mu,\frac{\tilde \mu(\R^d)}{\bar \mu(\R^d)}\bar\mu\right)\right)
\end{align}
Note that using Proposition \ref{prop:verylong}, as $\e \ll R$,
\begin{align}
0\leq \tilde\lambda(\R^d)- \bar \lambda(\R^d)\leq\pi_\e(\{(x,y)\in \#_R\colon |x-y|\geq \Lambda R\})\leq \delta\frac{1}{R^2} \int_{\#_{2R}}|x-y|^2 \, \dd \pi_\e \, .
\end{align}
Consequently, we can ensure
\begin{align}
1\leq \frac{\tilde \lambda(\R^d)}{\bar \lambda(\R^d)}\leq 1+ \delta\frac{1}{R^2 \pi_\e (P_R)} \int_{\#_{2R}}|x-y|^2 \, \dd \pi_\e.
\end{align}
In particular, it remains to estimate the first and third term on the right hand side of \eqref{eq:postProcess}. We first define the following set
\begin{equation}
A:= \{(x,y) \in \R^d \times \R^d : x\in \R^d\setminus B_{\Lambda R} , y \in B_R \}\, .
\end{equation}
We then proceed to estimate the term as follows:
\begin{align}
OT\left(\frac{\tilde \lambda(\R^d)}{\bar \lambda(\R^d)}\bar \lambda,\tilde\lambda\right)\leq& OT\left(\left(\frac{\tilde \lambda(\R^d)}{\bar \lambda(\R^d)}-1\right)\bar \lambda,\tilde \lambda-\bar \lambda\right)\\
\leq& OT\left(\left(\frac{\tilde \lambda(\R^d)}{\bar \lambda(\R^d)}-1\right)\bar \lambda,\Pi_y \pi_\e\big|_A )\right)\\
&+OT(\Pi_y \pi_\e\big|_A ,\tilde\lambda-\bar \lambda) \, .\label{eq:beforeproofest}
\end{align}
We treat the two terms on the right hand side separately. For the second term, we note that $\pi_\e\big|_A$ is a competitor since $\Pi_x \pi_\e\big|_A=\tilde{\lambda}-\bar{\lambda}$. Thus, we can control it in the following manner
\begin{equation}
OT\left(\left(\frac{\tilde \lambda(\R^d)}{\bar \lambda(\R^d)}-1\right)\bar \lambda,\Pi_y \pi_\e\big|_A )\right) \leq \int_{A}|x-y|^2 \, \mathrm{d}\pi_\e \leq \delta \int_{\#_{2R}}|x-y|^2 \, \mathrm{d}\pi_\e \, ,
\end{equation}
where in the last step we have applied Proposition~\ref{prop:verylong} and used the fact that $\e \ll R$. For the first term on the right hand side of~\eqref{eq:beforeproofest}, we note that any coupling $\pi\in \Pi\left(\left(\frac{\tilde \lambda(\R^d)}{\bar\lambda(\R^d)}-1\right)\bar \lambda,\Pi_y \pi_\e\big|_A )\right)$ must be supported on $B_{\Lambda R}\times B_R$ from which it follows that
\begin{equation}
 OT\left(\left(\frac{\tilde \lambda(\R^d)}{\bar \lambda(\R^d)}-1\right)\bar \lambda,\Pi_y \pi_\e\big|_A )\right) \lesssim  R^2 (\tilde{\lambda}(\R^d)-\bar{\lambda}(\R^d))\lesssim \delta \int_{\#_{2R}}|x-y|^2\, \dd \pi_\e \, .
\end{equation} 
It then follows that we have the estimate
\begin{align}
&\, \pi_\e (P_R) OT(\bar\lambda,\bar\mu) \\
\lesssim & \, OT(\tilde{\lambda},\tilde{\mu}) + \delta \int_{\#_{2R}}|x-y|^2\, \dd \pi_\e + \frac{\delta}{R^2} \int_{\#_{2R}}|x-y|^2\, \dd \pi_\e OT\left(\frac{\tilde \lambda(\R^d)}{\bar \lambda(\R^d)}\bar \lambda ,\frac{\tilde \mu(\R^d)}{\bar \mu(\R^d)}\bar \mu \right) \\
\lesssim & \, OT(\tilde{\lambda},\tilde{\mu}) + \delta \int_{\#_{2R}}|x-y|^2\, \dd \pi_\e \, ,
\end{align}
where in the last step we have used the fact that $\bar{\lambda},\bar{\mu}$ are supported on $B_{\Lambda R}$ and so 
\begin{equation}
OT\left(\frac{\tilde \lambda(\R^d)}{\bar \lambda(\R^d)}\bar \lambda ,\frac{\tilde \mu(\R^d)}{\bar \mu(\R^d)}\bar \mu \right)\lesssim R^2(\tilde{\lambda}(\R^d) + \tilde{\mu}(\R^d)) \leq R^2\, .
\end{equation}

Moreover, for $R\leq R_0$, $\pi_\e(\#_{\Lambda R})\lesssim \pi_\e(\#_R)$. Thus, \eqref{eq:quasiminEntro} can be reduced to the form of Assumption (i). We now provide the proof of the result.

\begin{proof}
As before, we prove the result only for the trivial scaling $\mathsf{s}=(\mathrm{Id},0,1,1)$, since the proof in the general case is similar. Let $(\bar \lambda,\bar \mu)$ denote the normalised marginals of $\restr{\pi_\e}{P_R}$, i.e.
\begin{equation}
\bar{\lambda}(A)=\frac{\restr{\pi_\e}{P_R}(A\times \R^d)}{\pi_\e(P_R)}\, ,
\end{equation}
and analogously for the other marginal. Let $\bar \pi = \pi_\e(P_R)\textup{argmin}\, OT_\e(\bar \lambda,\bar \mu)$. Note that $\pi_\e (P_R) \bar \lambda\leq \lambda$, $\pi_\e (P_R)\bar \mu \leq \mu$. Consider $\hat \pi = \restr{\pi_\e}{P_R^c} + \bar \pi$. Note that
\begin{align}
\Pi_x(\hat \pi) = \Pi_x\restr{\pi_\e}{P_R^c} + \bar \lambda = \lambda,
\end{align}
and similarly $\Pi_y(\hat \pi) = \mu$. Thus, $\hat \pi$ is valid competitor for $\pi_\e$ and we find
\begin{align}
&\int \lvert x-y\rvert^2 \dd \pi_\e + \e^2 H(\pi_\e | \lambda\otimes \mu) \leq \int \lvert x-y\rvert^2 \dd \hat \pi + \e^2 H(\hat \pi|\lambda\otimes \mu).\label{eq:entropicMin}
\end{align}
Set $A = \supp(\restr{\pi_\e}{P_R^c})\cap \supp(\bar \pi)$. Using convexity of the function $x \mapsto x\log x$, we obtain the following estimate:
\begin{align}
&\int_{A} \log\left(\frac{\dd(\pi_\e+\bar \pi)}{\dd(\lambda\otimes\mu)}\right)\dd(\pi_\e+\bar \pi)\\
\leq& \frac 1 2 \int_{A} \log\left(\frac{2\dd \pi_\e}{\dd(\lambda\otimes\mu)}\right)2\dd \pi_\e + \frac 1 2 \int_{A} \log\left(\frac{2\, \dd \bar\pi}{\dd(\lambda\otimes\mu)}\right)2\, \dd \bar\pi\\
=& \int_{A} \log\left(\frac{\dd \pi_\e}{\dd(\lambda\otimes\mu)}\right)\dd \pi_\e + \int_{A} \log\left(\frac{\, \dd \bar\pi}{\dd(\lambda\otimes\mu)}\right)\, \dd \bar\pi+\int_{A} \log(2)\dd(\pi_\e+\bar \pi)\\
\leq& \int_{A} \log\left(\frac{\dd \pi_\e}{\dd(\lambda\otimes\mu)}\right)\dd \pi_\e + \int_{A} \log\left(\frac{\, \dd \bar\pi}{\dd(\lambda\otimes\mu)}\right)\, \dd \bar\pi+\log(2) \left(\pi_\e(A)+\bar \pi(A) \right) \,.
\end{align}
Combining the above estimate with \eqref{eq:entropicMin}, we arrive at:
\begin{align}\label{eq:intermed1}
&\int_{\#_R} \lvert x-y\rvert^2 \dd \pi_\e + \e^2\int_{P_R} \log\left(\frac{\dd \pi_\e}{\dd(\lambda\otimes\mu)}\right)\dd \pi_\e \nonumber\\
\leq& \int \lvert x-y\rvert^2 \dd \bar \pi + \e^2\int \log\left(\frac{\, \dd \bar\pi}{\dd(\bar \lambda\otimes\bar\mu)}\frac{\dd(\bar \lambda\otimes\bar\mu)}{\dd(\lambda\otimes\mu)}\right)\, \dd \bar\pi+ \log(2)\e^2\left(\pi(A)+\bar\pi(A)\right)\nonumber\\
\leq& \pi_\e(P_R) OT_\e(\bar \lambda,\bar \mu)+2\log(2)\e^2\pi_\e (\#_{\Lambda R})- 2\e^2 \pi_\e(P_R)\log\pi_\e(P_R) \\& \, + \int_{\#_R\setminus P_R} \lvert x-y\rvert^2 \dd \pi_\e  \, ,
\end{align}
where we have used the fact that $\bar{\pi}(\R^d \times \R^d) \leq \pi_\e (P_R)$ and that $\pi_\e(A)\leq \pi_\e(B_{\Lambda R}\times B_{\Lambda R}) \leq \pi_\e (\#_{\Lambda R})$. Now, comparing entropic optimal transport to quadratic transport, we obtain for some constant $C_{\mathrm{ T}}<\infty$
\begin{align}
OT_\e(\bar \lambda,\bar \mu)\leq OT(\bar \lambda,\bar\mu)+\frac d2\e^2 \log(\e^{-2})+C_{\mathrm{T}}\e^2,
\end{align}
which leaves us with
\begin{align}
&\int_{\#_R} \lvert x-y\rvert^2 \dd \pi_\e + \e^2\int_{P_R} \log\left(\frac{\dd \pi_\e}{\dd(\lambda\otimes\mu)}\right)\dd \pi_\e \\
\leq & \, \pi_\e(P_R) OT(\bar \lambda,\bar \mu)+\pi_\e(P_R) \frac d2\e^2 \log(\e^{-2})\\&\,+C_{\mathrm{T}}\pi_\e(P_R)\e^2 \\ &\, +2\log(2)\e^2\pi_\e (\#_{\Lambda R})- 2\e^2 \pi_\e(P_R)\log\pi_\e(P_R) \\& \, + \int_{\#_R\setminus P_R} \lvert x-y\rvert^2 \dd \pi_\e   \, .
\label{eq:fullcostbound}
\end{align}
If we choose $\Lambda =11/4$, the last term on the right hand side can be controlled using Proposition \ref{prop:verylong} by $C_2 \delta \int_{\#_{2R}}|x-y|^2\, \, \mathrm{d}\pi_\e$ by choosing  $\e/R \leq \delta$. We will now derive a careful lower bound on the entropic contribution of $\pi_\e$ on $P_R$. Using the convexity of $x\mapsto x\log x$, we have that for any $\pi \ll \lambda \otimes \mu$, the following bound holds true 
\begin{align}
\int_{P_R} \log\left(\frac{\dd \pi_\e}{\dd(\lambda\otimes\mu)}\right)\frac{\dd \pi_\e}{\dd(\lambda\otimes\mu)}\dd(\lambda\otimes\mu)\geq \int_{P_R} \left(1+\log\left(\frac{\dd \pi}{\dd(\lambda\otimes\mu)}\right)\right)\dd \pi_\e - \pi(P_R) \, .
\label{eq:entropiclowerbound1}
\end{align}
% Note that
% \begin{align}
% \pi_\e(\#_R)\geq \pi_\e(B_R\times \R^d) = \mu(B_R)\geq (1-cD) R^d.
% \end{align}

We now make the following choice:
\[\pi = \pi_\e(P_R)\frac{\mathbf{1}_{B_R\times B_R} e^{-\frac{\delta\lvert y-x\rvert^2}{(1+\delta)\e^2}}\dd(\lambda\otimes \mu)}{\int_{B_R\times B_R} e^{-\frac{\delta\lvert y-x\rvert^2 }{(1+\delta)\e^2}}\dd(\lambda \otimes \mu)}\,.\]
 We can then obtain
\begin{align}
&\e^2\int_{P_R} \log\left(\frac{\dd \pi}{\dd(\lambda\otimes\mu)}\right)\dd \pi_\e \\=&\, -\frac{\delta}{1+\delta} \int_{B_R\times B_R} \lvert y-x\rvert^2\dd \pi_\e + \e^2\log\left(\frac{1}{\int_{B_R \times B_R} e^{-\frac{\delta\lvert y-x\rvert^2}{(1+\delta)\e^2}}\dd(\lambda\otimes \mu)}\right)\pi_\e(P_R) 
\\&\, +\e^2 \pi_\e(P_R)\log(\pi_\e(P_R)) \, .
\label{eq:lower_bound_entropy}
\end{align}
For the second term on the right hand side, we estimate the integral in the argument of the $\log$ as follows
\begin{align}
&\int_{B_R \times B_R} e^{-\frac{\delta\lvert y-x)\rvert^2}{(1+\delta)\e^2}}\,\dd(\lambda\otimes \mu)\\
=&\, \frac{\lambda(B_R)\mu(B_R)}{|B_R|^2}\int_{B_R \times B_R} e^{-\frac{\delta\lvert y-x)\rvert^2}{(1+\delta)\e^2}}\,\dd(x\otimes y) \\&\,+ \int_{B_R \times B_R} e^{-\frac{\delta\lvert y-x)\rvert^2}{(1+\delta)\e^2}}\,\dd(\lambda\otimes \mu)-\frac{\lambda(B_R)\mu(B_R)}{|B_R|^2} \dd(x\otimes y)\\
\leq & \, \frac{\lambda(B_R)\mu(B_R)}{|B_R|^2} \int_{B_R \times \R^d} e^{-\frac{\delta |y|^2}{(1+\delta)\e^2}} \,  \dd(x\otimes y) + R^\alpha [\lambda]_{\alpha,R}\int_{\R^d\times B_R}e^{-\frac{\delta\lvert y-x)\rvert^2}{(1+\delta)\e^2}}\,\dd(x\otimes \mu(y)) \\
& + \, R^\alpha [\mu]_{\alpha,R}\frac{\lambda(B_R)}{|B_R|} \int_{\R^d\times B_R}e^{-\frac{\delta\lvert y-x)\rvert^2}{(1+\delta)\e^2}}\,\dd(x\otimes y) \, .
\end{align}
Rescaling and simplifying the integrals, we obtain the bound
\begin{align}
&\int_{B_R \times B_R} e^{-\frac{\delta\lvert y-x)\rvert^2}{(1+\delta)\e^2}}\,\dd(\lambda\otimes \mu)\\
\leq & \,  \left(\frac{\e^2(1+\delta)}{\delta}\right)^{\frac d2}\left(\frac{\lambda(B_R)\mu(B_R)}{|B_R|}+\lambda(B_R)R^\alpha [\mu]_{\alpha,R} + \mu(B_R)R^\alpha [\lambda]_{\alpha,R} \right)\int_{\R^d} e^{-\lvert x\rvert^2}\,\dd x\\
\leq& \left(\frac{\pi\e^2(1+\delta)}{\delta}\right)^{\frac d2}\left(2\pi^2\lambda(0)\mu(0)+2\pi\lambda(0)+2\pi\mu(0)\right)R^d.
\end{align}
This leaves us with the lower bound
\begin{align}
&\e^2\log\left(\frac{1}{\int_{B_R \times B_R} e^{-\frac{\delta\lvert y-x\rvert^2}{(1+\delta)\e^2}}\dd(\lambda\otimes \mu)}\right)\pi_\e(P_R) \\ \geq & \, -\e^2\log(M)\pi_\e(P_R) +\frac d2\e^2 \log(\e^{-2})\pi_\e(P_R)\, ,
\label{eq:entropiclowerbound2}
\end{align}
where the constant $M=M(\delta,R,\lambda(0),\mu(0))$ is given by
\begin{equation}
M= 2\left(\frac{\pi\e^2(1+\delta)}{\delta}\right)^{\frac d2}\left(\pi^2\lambda(0)\mu(0)+\pi\lambda(0)+\pi\mu(0)\right)R^d.
\end{equation}
Note that $M$ is increasing as a function of $R$.
Combining \eqref{eq:entropiclowerbound2}, \eqref{eq:lower_bound_entropy}, and \eqref{eq:entropiclowerbound1}, we obtain
\begin{align}
&\int_{\#_R} \log\left(\frac{\dd \pi_\e}{\dd(\lambda\otimes\mu)}\right)\frac{\dd \pi_\e}{\dd(\lambda\otimes\mu)}\dd(\lambda\otimes\mu)\\
\geq&\,  -\e^2\log(M)\pi_\e(P_R) +\frac d2\e^2 \log(\e^{-2})\pi_\e(P_R)  +\e^2 \pi_\e(\#_R)\log(\pi_\e(P_R)) \\&\, -\frac{\delta}{1+\delta}\int_{B_R \times B_R}|y-x|^2 \, \dd \pi_\e\, ,
\end{align}
which together with \eqref{eq:fullcostbound} gives us
\begin{align}
&\int_{\#_R} \lvert x-y\rvert^2 \dd \pi_\e  \\
\leq & \, \pi_\e(P_R) OT(\bar \lambda,\bar \mu)+ \pi_\e(\#_{\Lambda R})C_1\e^2 +\delta \int_{\#_{2R}}|x-y|^2\,\mathrm{d}\pi_\e \, ,
\end{align}
for some constant $C_1<\infty$ which depends on $R,\delta,\lambda(0)\,\mu(0)$, where we have used the fact that $P_R \subset \#_R \subset \#_{\Lambda R}$. Moreover, we note that $C_1$ is increasing as a function of $R$, since $M$ is. This completes the proof of the proposition.
\end{proof}

Thus, using Propositions~\ref{prop:verylong} and~\ref{prop:quasiMin1}, we have shown that Assumptions (i) and (ii) are satisfied for entropic optimal transport for any $\delta\in (0,1)$ as long as $\e^2/R^2$ is chosen to be sufficiently small which is always possible. Thus,~\eqref{eq:smallness} will be satisfied as long as ~\eqref{eq:smallnessentropic} is (for possibly different choices of $\e_1>0$).
\subsection*{Acknowledgements}
The authors would like to thank Felix Otto, for suggesting this problem, for several useful discussions during the course of this work, and for providing the main ideas for the proof of Lemma~\ref{lem:soft}. The authors are also grateful to Francesco Mattesini for many fruitful discussions during the course of this project. A large part of this research was carried out while the authors were researchers at the Max Planck Institute for Mathematics in the Sciences (MPI-MiS), Leipzig. Both authors are grateful to the MPI-MiS for its support and hospitality.
 % section application_to_entropic_optimal_transport (end)

\bibliographystyle{amsalpha}
\bibliography{ref}

\end{document}